\newtheorem{theorem}{Theorem}
\newtheorem{lemma}[theorem]{Lemma}
\newtheorem{proposition}[theorem]{Proposition}
\newtheorem{lettertheorem}{Theorem}
\theoremstyle{definition}
\theoremstyle{remark}
\numberwithin{equation}{section}
\newcommand{\D}{\mathbb{D}}
\newcommand{\DD}{\widehat{\mathcal{D}}}
\newcommand{\Dd}{\widecheck{\mathcal{D}}}
\newcommand{\M}{\mathcal{M}}
\newcommand{\DDD}{\mathcal{D}}
\newcommand{\N}{\mathbb{N}}
\newcommand{\C}{\mathbb{C}}
\renewcommand{\phi}{\varphi}
\def\a{\alpha}       \def\b{\beta}        
     \def\om{\omega}      
\def\s{\sigma}              
                  \def\z{\zeta}
\def\omg{\widehat{\omega}}
\def\nug{\widehat{\nu}}
\def\sg{\widehat{\sigma}}
\def\at{\widetilde{\alpha}}
\def\bt{\widetilde{\beta}}
\renewcommand{\H}{\mathcal{H}}
\newenvironment{Prf}{\noindent{\emph{Proof of}}}
{\hfill$\Box$ }
\begin{document}

\title[Bergman projection on Lebesgue space induced by doubling weight]{Bergman projection on Lebesgue space induced by doubling weight}

\keywords{Bergman projection, doubling weight, Dirichlet type space, exponential weight}

\author{Jos\'e \'Angel Pel\'aez}
\address{Departamento de An\'alisis Matem\'atico, Universidad de M\'alaga, Campus de
Teatinos, 29071 M\'alaga, Spain} \email{japelaez@uma.es}

\author{Elena de la Rosa}
\address{Departamento de An\'alisis Matem\'atico, Universidad de M\'alaga, Campus de
Teatinos, 29071 M\'alaga, Spain} 
\email{elena.rosa@uma.es}

\author{Jouni R\"atty\"a}
\address{University of Eastern Finland, P.O.Box 111, 80101 Joensuu, Finland}
\email{jouni.rattya@uef.fi}

\thanks{This research was supported in part by  La Junta de Andaluc{\'i}a,
project FQM210.}


\maketitle


\begin{abstract}

Let $\omega$ and $\nu$ be radial weights on the unit disc of the complex plane, and denote $\sigma=\omega^{p'}\nu^{-\frac{p'}p}$ and $\omega_x =\int_0^1 s^x \omega(s)\,ds$ for all $1\le x<\infty$. Consider the one-weight inequality
	\begin{equation}\label{ab1}
  \|P_\omega(f)\|_{L^p_\nu}\le C\|f\|_{L^p_\nu},\quad 1<p<\infty,\tag{\dag}
  \end{equation}
for the Bergman projection $P_\omega$ induced by $\om$. It is shown that the moment condition
	$$
	D_p(\omega,\nu)=\sup_{n\in\N\cup\{0\}}\frac{\left(\nu_{np+1}\right)^\frac1p\left(\sigma_{np'+1}\right)^\frac1{p'}}{\om_{2n+1}}<\infty
	$$
is necessary for \eqref{ab1} to hold. Further, $D_p(\omega,\nu)<\infty$ is also sufficient for \eqref{ab1} if $\nu$ admits the doubling properties $\sup_{0\le r<1}\frac{\int_r^1 \omega(s)s\,ds}{\int_{\frac{1+r}{2}}^1 \omega(s)s\,ds}<\infty$ and 
$\sup_{0\le r<1}\frac{\int_r^1 \omega(s)s\,ds}{\int_r^{1-\frac{1-r}{K}} \omega(s)s\,ds}<\infty$ for some $K>1$.
In addition, an analogous result for the one weight inequality
$
  \|P_\omega(f)\|_{D^p_{\nu,k}}\le C\|f\|_{L^p_\nu},
$
where 
	$$
	\Vert f \Vert_{D^p_{\nu, k}}^p
	=\sum\limits_{j=0}^{k-1}\vert f^{(j)}(0)\vert ^p+
	\int_{\D} \vert f^{(k)}(z)\vert^p (1-\vert z \vert )^{kp}\nu(z)\,dA(z)<\infty, \quad k\in \N,
	$$
is established. The inequality \eqref{ab1} is further studied by using the necessary condition $D_p(\om,\nu)<\infty$ in the case of the exponential type weights
$\nu(r)=\exp \left(-\frac{\alpha}{(1-r^l)^{\beta}} \right)$ and  $\om(r)= \exp \left(-\frac{\widetilde{\alpha}}{(1-r^{\widetilde{l}})^{\widetilde{\beta}}} \right)$,
where $0<\alpha, \, \widetilde{\alpha}, \,  l, \, \widetilde{l}<\infty$ and $0<\b , \, \widetilde{\beta}\le 1$. 	
  \end{abstract}

\maketitle

\section{Introduction and previous results}
Let $\H(\D)$ denote the space of analytic functions in the unit disc $\D=\{z\in\C:|z|<1\}$. A non-negative function $\om\in L^1=L^1(\D)$ such that $\om(z)=\om(|z|)$ for all $z\in\D$ is called a radial weight. For $0<p<\infty$ and such an $\omega$, the Lebesgue space $L^p_\om$ consists of complex-valued measurable functions $f$ on $\D$ such that
    $$
    \|f\|_{L^p_\omega}^p=\int_\D|f(z)|^p\omega(z)\,dA(z)<\infty,
    $$
where $dA(z)=\frac{dx\,dy}{\pi}$ is the normalized Lebesgue area measure on $\D$. The corresponding weighted Bergman space is $A^p_\om=L^p_\omega\cap\H(\D)$. Throughout this paper we assume $\widehat{\om}(z)=\int_{|z|}^1\om(s)\,ds>0$ for all $z\in\D$, for otherwise $A^p_\om=\H(\D)$.

For any radial weight $\om$, the norm convergence in $A^2_{\om}$ implies the uniform convergence on compact subsets of $\D$, and hence each point evaluation~$L_z$ is a bounded linear functional on~$A^2_\om$. Therefore there exist Bergman reproducing kernels $B^\om_z\in A^2_\om$ such that
	$$ 
	L_z(f)=f(z)=\langle f, B_z^{\om}\rangle_{A^2_\om}=\int_{\D}f(\z)\overline{B_z^{\om}(\z)}\om(\z)\,dA(\z),\quad f \in A^2_{\om}.
	$$
The Hilbert space $A^2_\om$ is a closed subspace of~$L^2_\om$, and hence the orthogonal projection from~$L^2_\om$ to~$A^2_\om$ is given by
    $$
    P_\om(f)(z) = \int_\D f(\zeta)\overline{B^\om_z(\zeta)}\om(\zeta)\,dA(\zeta),\quad z\in\D.
    $$
The operator $P_\om$ is the Bergman projection, and the maximal Bergman projection is defined by
	$$
	P_{\om}^+ (f)(z)=\int_{\D}f(\z)\vert B_{z}^{\om}(\z)\vert\om(\z)\,dA(\z), \quad z \in \D.
	$$
The kernel of the classical weighted Bergman space $A^2_\alpha$, induced by the standard radial weight $(\alpha+1)(1-|z|^2)^\alpha$ with $\alpha>-1$, is denoted by $B^\alpha_z$, and $P_\alpha$ stands for the corresponding Bergman projection.
The boundedness of projections on $L^p$-spaces is an intriguing topic which has attracted a considerable amount of attention during the last decades. This is not only due to the mathematical difficulties the question raises, but also to its numerous applications in important questions in operator theory such as duality relationships and Littlewood-Paley inequalities for weighted Bergman spaces, and the famous Sarason's conjecture on Bergman spaces \cite{AlPoRe,PR19,Zhu}.

Let $\nu$ be a radial weight and $1<p<\infty$. In this paper we are interested in describing the radial weights $\omega$ such that $P_\omega: L^p_\nu \to L^p_\nu$ is bounded. As far as we know, this is an open problem even in the case of the standard weight $\nu(z)=(\alpha+1)(1-|z|^2)^\alpha$. The classical Bergman kernel $B^\alpha_z$ has the very useful formula 
	$$
	B^\alpha_z(\zeta)=\frac1{(1-\overline{z}\zeta)^{2+\alpha}}, \quad z,\z\in \D.
	$$
However, if $\om$ is only assumed to be a radial weight, then the Bergman reproducing kernel $B^\omega_z$ associated to it does not have such a neat explicit expression, and this is precisely one of the main difficulties to tackle the aforementioned question. Consequently, we are forced to work with the identity
	\begin{equation}\label{eq:B}
	B^\om_z(\z)=\sum_{n=0}^\infty\frac{\left(\overline{z}\z\right)^n}{2\om_{2n+1}}, \quad z,\z\in \D,
	\end{equation}
where $\om_{x}=\int_0^1r^x\om(r)\,dr$ for all $x\ge0$. Therefore the influence of the weight to the kernel is transmitted by its odd moments through this infinite sum and that is pretty much all one can say in general about the kernel. 

Our first  result provides a useful necessary condition for $P_\omega: L^p_\nu \to L^p_\nu$ to be bounded. Here and on the following we write 
	$$
	\sigma=\sigma_{\om,\nu,p}=\left(\frac{\om}{\nu^\frac1p}\right)^{p'}=\frac{\om^\frac{p}{p-1}}{\nu^\frac1{p-1}}
	$$
for short.
 
\begin{proposition}\label{Proy. acotada cond. pesos JouniINTRO}
Let $\om$ and $\nu$ be radial weights and  $1<p<\infty$. If $P_\om:L^p_\nu\to L^p_\nu$ is bounded, then	
	$$
	D_p(\om,\nu)
	=\sup_{n\in\N\cup\{0\}}\frac{\left(\nu_{np+1}\right)^\frac1p\left(\sigma_{np'+1}\right)^\frac1{p'}}{\om_{2n+1}}
	\le\|P_\om\|_{L^{p}_\nu\to L^{p}_\nu}<\infty.
	$$
\end{proposition}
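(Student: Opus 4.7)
The plan is to test the boundedness of $P_\omega$ on the family of rotationally skewed monomials $f_n(\zeta)=\zeta^n g(|\zeta|)$, with $n\in\N\cup\{0\}$ and $g\ge 0$ to be chosen. The virtue of this ansatz is that the Taylor expansion \eqref{eq:B} causes $P_\omega(f_n)$ to collapse: writing $\overline{B^\omega_z(\zeta)}=\sum_k(z\bar\zeta)^k/(2\omega_{2k+1})$ and switching to polar coordinates, $\int_0^{2\pi}e^{i(n-k)\theta}\,d\theta$ annihilates every term of the kernel except $k=n$, leaving
\[
P_\omega(f_n)(z)=\frac{z^n}{\omega_{2n+1}}\int_0^1 r^{2n+1}g(r)\omega(r)\,dr.
\]
Computing $\|f_n\|_{L^p_\nu}^p=2\int_0^1 r^{np+1}g^p\nu\,dr$ and $\|P_\omega(f_n)\|_{L^p_\nu}^p=2\nu_{np+1}\omega_{2n+1}^{-p}\bigl(\int_0^1 r^{2n+1}g\omega\,dr\bigr)^p$ and invoking $\|P_\omega(f_n)\|_{L^p_\nu}\le\|P_\omega\|\,\|f_n\|_{L^p_\nu}$, one obtains, for every admissible $g\ge 0$,
\[
\nu_{np+1}\left(\int_0^1 r^{2n+1}g(r)\omega(r)\,dr\right)^p \le \|P_\omega\|^p\,\omega_{2n+1}^p\int_0^1 r^{np+1}g(r)^p\nu(r)\,dr.
\]

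The second step is to optimize over $g$. Exploiting the split $2n+1=(n+1/p)+(n+1/p')$, Hölder applied to the factorization $r^{2n+1}g\omega=(r^{n+1/p}g\nu^{1/p})(r^{n+1/p'}\omega\nu^{-1/p})$ produces the conjugate factor
\[
\left(\int_0^1 r^{np'+1}\omega(r)^{p'}\nu(r)^{-p'/p}\,dr\right)^{1/p'}=\sigma_{np'+1}^{1/p'},
\]
and this inequality is sharp, saturated (up to normalization) by $g_*(r)=r^{n(p'-2)}\omega(r)^{p'-1}\nu(r)^{-p'/p}$, which by design makes both $\int r^{np+1}g_*^p\nu\,dr$ and $\int r^{2n+1}g_*\omega\,dr$ equal to $\sigma_{np'+1}$. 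Feeding $g_*$ into the displayed inequality collapses it to $\sigma_{np'+1}^{p-1}\nu_{np+1}\le\|P_\omega\|^p\omega_{2n+1}^p$; taking $p$-th roots and using $(p-1)/p=1/p'$, then taking the supremum over $n$, yields exactly $D_p(\omega,\nu)\le\|P_\omega\|_{L^p_\nu\to L^p_\nu}$.

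The one genuine technical point to finesse is that $g_*$ need not lie in $L^p_\nu$, and in general $f_n$ need not lie in $L^1_\omega$ for $P_\omega(f_n)$ to be immediately well-defined by the integral. I would sidestep this by running the argument on the truncations $g_N(r)=\min\{g_*(r),N\}\chi_{[0,1-1/N]}(r)$, which give bounded, compactly supported test functions for which everything is legitimate, and then pass to the limit by monotone convergence: either $\sigma_{np'+1}<\infty$ and the bound propagates, or $\|P_\omega(f_n^{(N)})\|_{L^p_\nu}/\|f_n^{(N)}\|_{L^p_\nu}\to\infty$, contradicting the hypothesis and thereby ruling out the infinite case a posteriori.
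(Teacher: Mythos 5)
Your proof is correct and follows essentially the same route as the paper: the paper also tests on $f_n(\zeta)=\zeta^n\varphi(|\zeta|)$, uses the orthogonality collapse $P_\omega(f_n)(z)=\omega_{2n+1}^{-1}\bigl(\int_0^1 s^{2n+1}\varphi(s)\omega(s)\,ds\bigr)z^n$, and chooses $\varphi(r)=\bigl(r^{(2-p)n}\omega(r)/\nu(r)\bigr)^{1/(p-1)}$, which is exactly your $g_*$. The only difference is cosmetic: where you justify the extremal choice by truncation and monotone convergence, the paper instead invokes the proof of \cite[Lemma~B]{MOPR21} to get $\sigma\in L^1$ first; your self-contained variant is equally valid.
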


We do not know whether or not the condition $D_p(\om,\nu)<\infty$ guarantees the boundedness of $P_\om:L^p_\nu\to L^p_\nu$ for any pair $(\omega,\nu)$ of radial weights. In fact, almost two decades ago Dostani\'c posed the question of describing the radial weights $\om$ such that $P_\om$ is bounded on~$L^p_\om$, and showed that the condition 
	$$
	D_p(\om,\om)=\sup_{n\in\N\cup\{0\}}\frac{\left(\omega_{np+1}\right)^\frac1p\left(\omega_{np'+1}\right)^\frac1{p'}}{\om_{2n+1}}<\infty
	$$ 
is necessary for $P_\omega: L^p_\omega\to L^p_\omega$ to be bounded \cite{Dostanic}. Observe that $D_p(\om,\om)\ge1$ by H\"older's inequality, and hence the Dostani\'c condition $D_p(\om,\om)<\infty$ is just an asymptotic reverse H\"older condition.

In this study we will show that the condition $D_p(\om,\nu)<\infty$ implies the boundedness of $P_\om$ on $L^p_\nu$ for a large class of radial weights $\nu$ including all the standard weights. In order to give the precise statements some more definitions are in order. A radial weight $\om$ belongs to $\DD$ if there exists $C=C(\om)>0$ such that 
	$$
	\omg(r)\le C\omg\left(\frac{1+r}{2}\right),\quad r\to1^-,
	$$
and $\om\in\Dd$ if there exist $K=K(\om)>1$ and $C=C(\om)>1$ such that 
	$$
	\omg(r)\ge C\omg\left(1-\frac{1-r}{K}\right),\quad r\to1^-.
	$$ 
We denote $\mathcal{D}=\DD\cap\Dd$ for short, and we simply say that $\omega$ is a doubling weight if $\om\in\DDD$. The next few lines are dedicated to offer a brief insight to these classes of radial weights.
Doubling weights appear in a natural way in many questions on operator theory. For instance, the Bergman projection $P_\om$ acts as a bounded operator from the space $L^\infty$ of bounded complex-valued functions to the Bloch space $\mathcal{B}$ if and only if $\om\in\DD$, and the Littlewood-Paley formula
	\begin{equation}\label{LP}
	\|f\|_{A^p_\om}^p\asymp\int_\D|f'(z)|^p(1-|z|)^p\om(z)\,dA(z)+|f(0)|^p,\quad f\in A^p_\om,
	\end{equation}
is also equivalent to $\om\in\DDD$. Further, each standard radial weight obviously belongs to $\DDD$, while $\Dd\setminus\DDD$ contains exponential type weights such as
	$$
	\nu(r)=\exp \left(-\frac{\a}{(1-r^l)^{\b}} \right)\quad 0<\alpha,l,\beta<\infty.
	$$
The class of rapidly increasing weights, introduced in \cite{PelRat}, lies entirely within $\DD\setminus\DDD$, and a typical example of such a weight is 
	$$
	\nu(z)=\frac1{(1-|z|^2)\left(\log\frac{e}{1-|z|^2}\right)^{\alpha}},\quad 1<\alpha<\infty.
	$$
To this end we emphasize that the containment in $\DD$ or $\Dd$ does not require differentiability, continuity or strict positivity. In fact, weights in these classes may vanish on a relatively large part of each outer annulus $\{z:r\le|z|<1\}$ of $\D$. For basic properties of the aforementioned classes, concrete nontrivial examples and more, see \cite{P,PelRat,PR19} and the relevant references therein.
 
Apart from the moment condition $D_p(\om,\nu)<\infty$, we will naturally face the requirement of the finiteness of the quantity	
	\begin{equation*}
	A_p(\om,\nu)=\sup\limits_{0\le r<1}\frac{\left(\int_r^1\nu(t)t\,dt\right)^\frac1p\left(\int_r^1\sigma(t)t\,dt\right)^\frac1{p'}}{\int_r^1\om(t)t\,dt}.
	\end{equation*}
If $r$ is uniformly bounded away from one, then the quotient in the supremum above is certainly uniformly bounded, in particular, it equals to $\|\nu\|_{L^1}^\frac1p\|\sigma\|_{L^1}^\frac1{p'}/\|\om\|_{L^1}\in(0,\infty)$ for $r=0$ as all the weights involved are non-trivial radial weights. Therefore the boundedness of $A_p(\om,\nu)$ is equivalent to the fact that $\sigma\in L^1$ and
		$$
		\limsup_{r\to1^-}\frac{\nug(r)^{\frac{1}{p}}\widehat{\sigma}(r)^\frac1{p'}}{\omg(r)}
		=\limsup_{r\to1^-}\frac{\left(\int_r^1\nu(t)t\,dt\right)^\frac1p\left(\int_r^1\sigma(t)t\,dt\right)^\frac1{p'}}{\int_r^1\om(t)t\,dt}<\infty.
		$$
		
The next theorem is our first main result.

\begin{theorem}\label{th:main}
 Let $\om$ be a radial weight, $\nu \in \DDD$ and  $1<p<\infty$. Then the following statements are equivalent:
\begin{itemize}
\item[(i)] $P_{\om}:L^p_{\nu}\to L^p_{\nu}$ is bounded;
\item[(ii)] $P_{\om}^+:L^p_{\nu}\to L^p_{\nu}$ is bounded;
\item[(iii)] $D_p(\om,\nu)<\infty$;
\item[(iv)] $A_p(\om, \nu)<\infty$ and $\om \in \DD$.
\end{itemize}
\end{theorem}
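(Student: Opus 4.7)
The plan is to prove the cycle $\text{(ii)}\Rightarrow\text{(i)}\Rightarrow\text{(iii)}\Rightarrow\text{(iv)}\Rightarrow\text{(ii)}$. The first two implications are essentially free: $\text{(ii)}\Rightarrow\text{(i)}$ follows from the pointwise bound $|P_\om f|\le P_\om^+(|f|)$, while $\text{(i)}\Rightarrow\text{(iii)}$ is exactly Proposition~\ref{Proy. acotada cond. pesos JouniINTRO}. So the real content lies in $\text{(iii)}\Rightarrow\text{(iv)}$ and $\text{(iv)}\Rightarrow\text{(ii)}$.

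For $\text{(iii)}\Rightarrow\text{(iv)}$, the bridge between moments and tails is the standard comparison $\eta_x\asymp\widehat{\eta}(1-1/x)$, valid for every $\eta\in\DD$ by splitting the integral at $1-1/x$ and applying doubling on the tail. Applied to $\nu\in\DDD\subset\DD$ it gives $\nu_{np+1}\asymp\widehat{\nu}(1-1/n)$, while the elementary lower bound $\om_{2n+1}\gtrsim\widehat{\om}(1-1/n)$ requires no hypothesis on $\om$. I would then test the ratio defining $A_p(\om,\nu)$ along the sequence $r_n=1-1/n$ and use these comparisons together with $\text{(iii)}$; the remaining piece is a comparison of $\widehat{\sigma}(r_n)$ with $\sigma_{np'+1}$, which I would obtain by combining the trivial direction $\sigma_{np'+1}\gtrsim\widehat{\sigma}(r_n)$ with the lower doubling $\nu\in\Dd$, which keeps the mass of $\widehat{\nu}$ concentrated near the boundary and thereby controls the portion of $\sigma$ away from it. Interpolating between the $r_n$ via the monotonicity of $\widehat{\om},\widehat{\nu},\widehat{\sigma}$ and the doubling of $\nu$ then yields $A_p(\om,\nu)\lesssim D_p(\om,\nu)$. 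To extract $\om\in\DD$, I would compare $\om_{2n+1}$ with $\om_{4n+1}$ by sandwiching $\text{(iii)}$ at the two scales; since $\nu\in\DDD$ already forces $\nu_{np+1}\asymp\nu_{2np+1}$ and the $\sigma$-estimate above gives $\sigma_{np'+1}\asymp\sigma_{2np'+1}$, one concludes $\om_{2n+1}\asymp\om_{4n+1}$, which is well known to be equivalent to $\om\in\DD$.

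For $\text{(iv)}\Rightarrow\text{(ii)}$, I would apply Schur's test to the positive integral operator $P_\om^+$ on $L^p_\nu$. The essential analytic input is a pointwise bound on the reproducing kernel valid for $\om\in\DD$, namely
$$
|B_z^\om(\z)|\lesssim\frac{1}{(1-|z\z|)\,\widehat{\om}(z\z)},\qquad z,\z\in\D,
$$
which is by now standard for doubling weights. With Schur test functions of the form
$$
h(z)=\widehat{\sigma}(z)^{a}\widehat{\om}(z)^{b}(1-|z|)^{c}
$$
and exponents $a,b,c$ determined by a H\"older bookkeeping, the two Schur integrals reduce, after integrating in the angular variable and using a radial integration by parts, to suprema of products of the tail quantities $\widehat{\om},\widehat{\nu},\widehat{\sigma}$ at a common radius. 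The hypothesis $A_p(\om,\nu)<\infty$ is then exactly what is needed to close the two estimates, and $\om\in\DD$ is what makes the kernel bound above available.

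The main obstacle is that $\sigma=\om^{p'}\nu^{-p'/p}$ need not itself be a doubling weight, so the clean moment-to-tail equivalence available for $\nu$ cannot be invoked directly for $\sigma$. Both the extraction of $\om\in\DD$ from the moment condition and the transfer $\sigma_{np'+1}\leftrightarrow\widehat{\sigma}(1-1/n)$ must therefore be carried out indirectly, leaning simultaneously on the upper and lower doubling halves of the hypothesis $\nu\in\DDD$, and this is where the technical subtlety of the argument is concentrated.
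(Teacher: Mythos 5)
Your overall architecture (the cycle $\mathrm{(ii)}\Rightarrow\mathrm{(i)}\Rightarrow\mathrm{(iii)}\Rightarrow\mathrm{(iv)}\Rightarrow\mathrm{(ii)}$, with Proposition~\ref{Proy. acotada cond. pesos JouniINTRO} carrying the necessity) matches the paper, and the first two implications are fine, but each of the two substantive implications has a genuine gap.

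In $\mathrm{(iii)}\Rightarrow\mathrm{(iv)}$, the extraction of $\om\in\DD$ is circular as you have set it up. Under $D_p(\om,\nu)<\infty$, H\"older gives $\om_{2n+1}\le\left(\nu_{np+1}\right)^{1/p}\left(\sigma_{np'+1}\right)^{1/p'}$, so in fact $\om_{2n+1}\asymp\left(\nu_{np+1}\right)^{1/p}\left(\sigma_{np'+1}\right)^{1/p'}$. Hence, once $\nu_{np+1}\asymp\nu_{2np+1}$ is known from $\nu\in\DD$, the moment doubling $\om_{2n+1}\asymp\om_{4n+1}$ you want to conclude is \emph{equivalent} to the moment doubling $\sigma_{np'+1}\asymp\sigma_{2np'+1}$ you propose to feed into it. Your justification of the latter --- that $\nu\in\Dd$ ``keeps the mass of $\widehat{\nu}$ concentrated near the boundary and thereby controls the portion of $\sigma$ away from it'' --- is not an argument: $\sigma=\om^{p'}\nu^{-p'/p}$ depends on $\om$ in an essential way, and nothing a priori prevents most of the mass of $\sigma_{np'+1}$ from sitting on $[0,1-1/n]$. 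An independent input is required to break the tie. The paper's device (Proposition~\ref{Proy. acotada cond. pesos Jouni}(ii)) is an asymmetric H\"older estimate: for $1-p'<y<2-p'$ one has $\om_{n(p'+y)+1}\le\left(\sigma_{np'+1}\right)^{1/p'}\left(\nu_{n(p'+py)+1}\right)^{1/p}$, and since $p'+py<p$ the quotient $\nu_{np+1}/\nu_{n(p'+py)+1}$ is bounded below via $\nu\in\DD$ and Lemma~\ref{Lemma:simple}; combined with $D_p(\om,\nu)<\infty$ this gives $\om_{n(p'+y)+1}\lesssim\om_{2n+1}$ with $p'+y<2$, whence $\om\in\DD$. (Once $\om\in\DD$ is in hand, your route to $A_p(\om,\nu)\lesssim D_p(\om,\nu)$ via the trivial tail-versus-moment inequalities and $\om_{2n+1}\lesssim\widehat{\om}(1-1/n)$ is fine.)

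In $\mathrm{(iv)}\Rightarrow\mathrm{(ii)}$, your Schur test is claimed to close using only $A_p(\om,\nu)<\infty$ and $\om\in\DD$, with $\nu\in\Dd$ never invoked. But these two conditions alone do \emph{not} imply that $P_\om^+$ is bounded on $L^p_\nu$: taking $\om=\nu\in\DD\setminus\M$ gives $\sigma=\om$ and $A_p(\om,\om)=1$, yet $P_\om^+$ is unbounded on $L^p_\om$ --- this is stated explicitly in the paper after the proof of Theorem~\ref{teorema 2}, citing \cite[Theorems~3 and 20]{PR19}. So the two Schur integrals cannot both close under your stated hypotheses; with $\om\in\DD$ only, the radial integrals against $\frac{1}{(1-t)\widehat{\om}(t)}$ accumulate logarithmic losses that no choice of $a,b,c$ in $h(z)=\widehat{\sigma}(z)^{a}\widehat{\om}(z)^{b}(1-|z|)^{c}$ can absorb. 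The lower doubling of $\nu$ must enter essentially: in the paper it does so through Lemma~\ref{lema Ap+ cond pesos}(iii), which upgrades $\om\in\DD$ to $\om\in\Dd$ (using $\nu\in\M$ and the reverse-H\"older structure of $A_p$), after which $\om,\nu\in\DDD$ and the known sufficiency result Theorem~\ref{teorema 13PR19} applies. Any self-contained Schur argument would likewise have to exploit the polynomial decay of $\widehat{\om}$ and $\widehat{\nu}$ furnished by $\Dd$.
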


%
%
%

It is known that the condition $A_p(\om,\nu)<\infty$ is closely related to the boundedness of~$P_{\om}$ and~$P_{\om}^+$ on $L^p_{\nu}$, under certain  hypotheses on the radial weights involved \cite{PR19}, but the condition $D_p(\om,\nu)<\infty$ seems to be new in this context. To be precise, the next result follows from \cite[Theorem~13]{PR19}. Here and from now on 
	\begin{equation*}\label{eq:Mpcondition}
  M_p(\om,\nu)
	=\sup_{0\le r<1}\left(\int_0^r\frac{\nu(s)s}{\left(\int_s^1\om(t)t\,dt\right)^p}\,ds+1\right)^\frac1p
	\left(\int_r^1\sigma(t)t\,dt\right)^{\frac{1}{p'}}.
  \end{equation*}
	
\begin{lettertheorem}\label{teorema 13PR19}
Let $1<p<\infty$ and $\om\in\DD$, and let $\nu$ be a radial weight. Then the following statements are equivalent:
		\begin{itemize}
    \item[(i)] $P_{\om}^+:L^p_{\nu}\to L^p_{\nu}$ is bounded;
    \item[(ii)] $A_p(\om,\nu)<\infty$ and $\omega,\nu\in\DDD$;
    \item[(iii)] $M_p(\om,\nu)<\infty$ and $\omega,\nu\in\DDD$.
		\end{itemize}
\end{lettertheorem}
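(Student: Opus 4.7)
The plan is to handle the equivalence (ii) $\Leftrightarrow$ (iii) as a purely analytic consequence of the doubling hypotheses on the weights, and to relate each of them to the operator inequality (i) via a Schur-type sufficiency argument together with a test-function necessity argument.

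For (ii) $\Leftrightarrow$ (iii), assuming $\om,\nu\in\DDD$, one starts from the $M_p$ integrand $\int_0^r \nu(s)s/\widehat{\om}(s)^p\,ds$ and decomposes $[0,r]$ into a dyadic sequence of annuli on which $\widehat{\om}$ is essentially constant, a decomposition made available by the combined $\DD$--$\Dd$ behaviour of $\om$. On each such annulus the numerator collapses to a piece of $\widehat{\nu}$, and the doubling of $\nu$ sums these pieces as a geometric series controlled by a single term of order $\widehat{\nu}(r)/\widehat{\om}(r)^p$, yielding the $A_p$ bound. The reverse inequality $M_p \lesssim A_p$ is immediate by dropping the inner portion of the $M_p$ integral and using the monotonicity of $\widehat{\om}$.

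For (ii) $\Rightarrow$ (i), the approach is to apply Schur's test to the positive kernel $|B^\om_z(\zeta)|\om(\zeta)$ with an auxiliary function $h$ built from powers of $1-|z|$ and $\widehat{\om}(z)$. The calculation hinges on a pointwise bound of the form $|B^\om_z(\zeta)| \lesssim 1/\bigl(\widehat{\om}(\overline{z}\zeta)\,|1-\overline{z}\zeta|\bigr)$, which must be extracted from the series representation \eqref{eq:B} through moment asymptotics that $\om\in\DDD$ supplies. Once this estimate is in place, both Schur conditions reduce, after a radial integration, to inequalities that coincide either with $A_p$ or with $M_p$. For the converse direction (i) $\Rightarrow$ (ii), which must additionally produce the doubling of both weights, the natural move is to test $P_\om^+$ against $f_r=\chi_{\{|\zeta|>r\}}(\sigma/\om)$, observing that $P^+_\om f_r(z) \gtrsim \int_r^1 \sigma(t)t\,dt/\widehat{\om}(z)$ on $\{|z|<r\}$; raising to the $p$th power, integrating against $\nu$ on this sub-disc, and dividing by $\|f_r\|_{L^p_\nu}^p$ delivers $A_p$. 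The doubling of $\nu$ then comes from applying $P_\om^+$ to characteristic functions of thin nested annuli near $\partial\D$ and comparing the outputs; the reverse doubling $\om\in\Dd$ is obtained by dualising to the adjoint acting on $L^{p'}_\sigma$.

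The main obstacle I foresee is securing the pointwise kernel estimate for $B^\om_z$ when $\om\in\DDD$ fails to be standard: since no explicit closed form is available, one must extract sharp bounds from \eqref{eq:B} via the moment asymptotics implied by $\om\in\DDD$, which in turn rests on the finer properties of doubling weights developed in \cite{P,PelRat,PR19}. Once this kernel estimate is in hand, the Muckenhoupt-style integral manipulations and the choice of Schur weights $h$ proceed along classical lines, and the test-function arguments for necessity are routine.
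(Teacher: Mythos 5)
A preliminary remark: this is Theorem~\ref{teorema 13PR19}, which the paper does not prove at all --- it is quoted from \cite[Theorem~13]{PR19} as known background. There is therefore no in-paper proof to compare yours against, and I am judging your outline on its own merits and against the techniques the paper actually deploys for the closely related Proposition~\ref{Suficiencia acotacion Twk+}.

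The genuine gap is the pointwise kernel estimate $|B^\om_z(\zeta)|\lesssim\bigl(\widehat{\om}(\overline{z}\zeta)\,|1-\overline{z}\zeta|\bigr)^{-1}$ on which your whole Schur-test argument for (ii)$\Rightarrow$(i) rests. You flag it as the ``main obstacle'' but assert that it can be extracted from \eqref{eq:B} via moment asymptotics; it cannot by any routine argument, and for a general $\om\in\DDD$ no such pointwise bound is known. The moment asymptotics $\om_{2n+1}\asymp\widehat{\om}(1-1/n)$ give, via the triangle inequality, only $|B^\om_z(\zeta)|\le B^\om_{|z|}(|\zeta|)\asymp\bigl(\widehat{\om}(|z\zeta|)(1-|z\zeta|)\bigr)^{-1}$, and an Abel summation splitting the series at $n\asymp|1-\overline{z}\zeta|^{-1}$ fails: on the range $|1-\overline{z}\zeta|^{-1}<n<(1-|z\zeta|)^{-1}$ the coefficients $1/\om_{2n+1}$ still grow by a full factor comparable to $1/\widehat{\om}(|z\zeta|)$ while $|z\zeta|^n$ has not yet begun to decay, so the cancellation needed to replace $1-|z\zeta|$ by $|1-\overline{z}\zeta|$ is exactly what is missing. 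This is why the Pel\'aez--R\"atty\"a machinery --- including the proof of Proposition~\ref{Suficiencia acotacion Twk+} in this very paper --- works instead with the integral means $M_1\bigl(r,(B^\om_z)^{(k)}\bigr)\lesssim\int_0^{r|z|}\frac{dt}{\widehat{\om}(t)(1-t)^{k+1}}+1$ from \cite[Theorem~1]{PelRatproj}, fed into H\"older's inequality and Fubini's theorem rather than into a pointwise Schur test; a correct proof of (ii)$\Rightarrow$(i) must take that route (or supply the missing kernel estimate, which would be a substantial new result). The remaining pieces of your outline are sound in spirit but loose in detail: the test-function computation for $A_p(\om,\nu)<\infty$ is standard; in \cite{PR19} the necessity of $\nu\in\DDD$ comes from $\nu\in\M$ (their Proposition~20) combined with $\nu\in\DD$, and $\om\in\Dd$ then follows from $A_p(\om,\nu)<\infty$ together with $\nu\in\M$ as in Lemma~\ref{lema Ap+ cond pesos}(iii), not by dualising; and $M_p(\om,\nu)\lesssim A_p(\om,\nu)$ is not ``immediate by dropping the inner portion of the integral'' but requires the integration trick of \cite[pp.~55--56]{PR19}, although it does hold for arbitrary radial weights.
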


To prove Theorem~\ref{th:main} we will show that Theorem~\ref{teorema 13PR19} can be improved in the sense that the boundedness of $P_{\om}^+:L^p_{\nu}\to L^p_{\nu}$ can be achieved under strictly weaker hypotheses on~$\omega$ and~$\nu$. We write $\om\in\M$ if there exist constants $C=C(\om)>1$ and $K=K(\om)>1$ such that $\om_{x}\ge C\om_{Kx}$ for all $x\ge1$. It is known that $\Dd\subsetneq\M$ and $\DDD=\DD\cap\M$ by \cite[Proof of Theorem~3 and Proposition~14]{PR19}.
 
\begin{theorem}\label{teorema 1}
Let $1<p<\infty$ and $\om\in\DD$, and let $\nu$ be a radial weight. Then the following statements are equivalent:
		\begin{itemize}
    \item[(i)] $P_{\om}^+:L^p_{\nu}\to L^p_{\nu}$ is bounded;
    \item[(ii)] $D_p(\om,\nu)<\infty$ and $\nu\in\M$;
    \item[(iii)] $A_p(\om,\nu)<\infty$ and $\nu\in\M$;
    \item[(iv)] $M_p(\om,\nu)<\infty$ and $\nu\in\M$.
		\end{itemize}
\end{theorem}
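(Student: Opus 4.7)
The plan is to prove the cycle (i) $\Rightarrow$ (ii) $\Rightarrow$ (iii) $\Rightarrow$ (iv) $\Rightarrow$ (i). The equivalences among (ii), (iii), (iv) are purely analytic: they translate between the moment supremum $D_p(\om,\nu)$ and the tail-integral suprema $A_p(\om,\nu)$ and $M_p(\om,\nu)$. The dictionary rests on the comparison $\om_{2n+1}\asymp\widehat\om(1-1/n)$ (valid under $\om\in\DD$), the universal inequality $\mu_x\gtrsim\widehat\mu(1-1/x)$ for any radial weight $\mu$, and the $\M$-property of $\nu$, which together with duality provides matching one-sided comparisons for $\sigma$. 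The equivalence $A_p\Leftrightarrow M_p$ is a standard dyadic reduction: the integral $\int_0^r\nu(s)s/\widehat\om(s)^p\,ds$ is dominated by its largest annular contribution, and the combination $\om\in\DD$ with $\nu\in\M$ controls the geometric series formed by summing these contributions.

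For the necessity (i) $\Rightarrow$ (ii), the condition $D_p(\om,\nu)<\infty$ is immediate from Proposition~\ref{Proy. acotada cond. pesos JouniINTRO} because $|P_\om f(z)|\le P_\om^+(|f|)(z)$ pointwise, so boundedness of $P_\om^+$ on $L^p_\nu$ transfers to $P_\om$. To extract $\nu\in\M$, I would test $P_\om^+$ against carefully chosen non-negative functions supported in annuli $\{r_n\le|\zeta|<r_{n+1}\}$ intersected with narrow sectors in which $\arg z$ is aligned with $\arg\zeta$; in such sectors the series \eqref{eq:B} gives a pointwise lower bound on $|B^\om_z(\zeta)|$ coming from a single dominant term, and the resulting lower bound for $P_\om^+f(z)$ together with the boundedness hypothesis forces $\nu_x\ge c\nu_{Kx}$ for some $K>1$.

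For the sufficiency (iv) $\Rightarrow$ (i), I plan to apply Schur's test to the positive kernel $|B^\om_z(\zeta)|\om(\zeta)$. The only input needed from $\om\in\DD$ is the classical bound
$$
|B^\om_z(\zeta)|\le \frac{C}{\widehat\om(\max\{|z|,|\zeta|\})\,|1-\overline z\zeta|}.
$$
I would take a Schur weight of the form $h(\zeta)=\widehat\sigma(\zeta)^{-1/(pp')}$, with minor adjustments between the two Schur inequalities, and split each integral at $|\zeta|=|z|$; the inner part reduces to a quantity bounded by the first summand of $M_p(\om,\nu)$, while the outer part produces the second summand. The hypothesis $\nu\in\M$ enters when passing from moment-type to tail-type integrals of $\nu$ in order to match the $\widehat\om$-denominators produced by the kernel bound.

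The main obstacle will be the Schur step in this reduced generality. Theorem~\ref{teorema 13PR19} assumes $\nu\in\DDD=\DD\cap\M$, and its argument presumably uses the $\DD$-side of doubling to absorb a factor that, under $\nu\in\M$ alone, must be handled by a more careful dyadic splitting exploiting the $\DD$-structure of $\om$. A secondary delicate point is the necessity of $\nu\in\M$: since $|B^\om_z|$ is the modulus of an oscillatory series, extracting a lower bound sharp enough to see the correct decay of $\nu$-moments requires both a restriction to a narrow sector and a suitable choice of dilation of $z$ along the test annulus.
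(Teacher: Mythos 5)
Your proposal misses the structural observation that drives the paper's proof and, as a result, heads into a Schur-test argument that cannot be completed as described. The paper does not re-prove sufficiency in ``reduced generality'': it shows that the hypotheses of (iii) self-improve to those of Theorem~\ref{teorema 13PR19}. Concretely, $\om\in\DD$ together with $A_p(\om,\nu)<\infty$ forces $\nu,\sigma\in\DD$ (Lemma~\ref{lema Ap+ cond pesos}(i)), and adding $\nu\in\M$ forces $\om\in\Dd$ (Lemma~\ref{lema Ap+ cond pesos}(iii)); since $\DD\cap\M=\DD\cap\Dd=\DDD$, one gets $\om,\nu\in\DDD$ and (i) follows by citing Theorem~\ref{teorema 13PR19}. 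The obstacle you flag as the main difficulty is therefore illusory. Moreover, the route you propose around it is blocked: the pointwise bound $|B^\om_z(\zeta)|\lesssim\bigl(\widehat{\om}(\max\{|z|,|\zeta|\})\,|1-\overline{z}\zeta|\bigr)^{-1}$ is not available for a general $\om\in\DD$; in this generality only estimates for the integral means $M_1\bigl(r,(B^\om_z)^{(k)}\bigr)$ are known (see \cite[Theorem~1]{PelRatproj}), which is precisely why the paper's Proposition~\ref{Suficiencia acotacion Twk+} is organized around $M_1$ and Fubini rather than a pointwise kernel estimate.

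There are two further gaps. First, your dictionary between $D_p(\om,\nu)$ and $A_p(\om,\nu)$ needs two-sided comparisons $\mu_x\asymp\widehat{\mu}(1-1/x)$ for $\mu=\nu$ and $\mu=\sigma$, which require $\nu,\sigma\in\DD$; the hypothesis $\nu\in\M$ only yields the reverse-doubling direction, so these memberships must first be \emph{derived} from $D_p(\om,\nu)<\infty$ and $\om\in\DD$, as in Proposition~\ref{Proy. acotada cond. pesos Jouni}(ii) and Lemma~\ref{lema Ap+ cond pesos}(i). Second, for the implication (i)$\Rightarrow\nu\in\M$ the paper simply invokes \cite[Proposition~20]{PR19}; your annulus-and-sector testing is a plausible sketch of how such a result is proved, but extracting a single dominant term from the oscillatory series \eqref{eq:B} with uniform constants is a substantial argument that your outline does not supply.
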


For a radial weight $\nu$, $k\in\N$ and $0<p<\infty$, the Dirichlet space $D^p_{\nu, k}$ consists of functions $f\in\H(\D)$ such that
	$$
	\Vert f \Vert_{D^p_{\nu, k}}^p
	=\sum\limits_{j=0}^{k-1}\vert f^{(j)}(0)\vert ^p+
	\int_{\D} \vert f^{(k)}(z)\vert^p (1-\vert z \vert )^{kp}\nu(z)\,dA(z)<\infty.
	$$
Obviously, $D^p_{\nu,1}=A^p_\nu$ by \eqref{LP}, and in general, the identity $D^p_{\nu,k}=A^p_\nu$ holds for $k\in\N$ if and only if $\nu\in\DDD$ \cite[Theorem~5]{PR19}. Moreover, a necessary condition (and also sufficient for even $p$'s) for the embedding $D^p_{\nu, k}\subset A^p_\nu$ is $\nu\in\mathcal{M}$ \cite[Section 8.1]{PR19}. Therefore the spaces~$D^p_{\nu, k}$ and~$A^p_\nu$ are different for $\nu\in \mathcal{M}\setminus \DD$. Our next result describes the boundedness of the Bergman projection $P_\omega: L^p_{\nu}\to D^p_{\nu, k}$ when $\om\in\DD$ and $\nu\in\M$. For $k\in\N$ and a radial weight $\omega$ we denote
	$$
	T_{\om, k}^+(f)(z)=(1-\vert z \vert)^k\int_{\D}f(\z)\left|(B_{\z}^{\om})^{(k)}(z)\right|\om (\z)\,dA(\z) , \quad z \in \D.
	$$
	
\begin{theorem}\label{teorema 2}
Let $1<p<\infty$, $k\in\N$, $\om\in\DD$ and $\nu\in\M$. Then the following statements are equivalent:
\begin{itemize}
    \item[(i)] $T_{\om, k}^+:L^p_{\nu}\to L^p_{\nu}$ is bounded;
    \item[(ii)] $P_{\om}:L^p_{\nu}\to D^p_{\nu, k}$ is bounded;
    \item[(iii)] $P_{\om}:L^p_{\nu}\to L^p_{\nu}$ is bounded;
    \item[(iv)] $D_p(\om, \nu)<\infty$;
    \item[(v)] $A_p(\om, \nu)<\infty$.
\end{itemize}
\end{theorem}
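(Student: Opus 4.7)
The strategy is to show that all five conditions reduce to the moment condition $D_p(\om,\nu)<\infty$. The equivalences $(iii)\Leftrightarrow(iv)\Leftrightarrow(v)$ essentially reassemble earlier results: Proposition~\ref{Proy. acotada cond. pesos JouniINTRO} gives $(iii)\Rightarrow(iv)$; under the hypothesis $\nu\in\M$, Theorem~\ref{teorema 1} yields $(iv)\Leftrightarrow(v)$ together with the boundedness of $P_\om^+$ on $L^p_\nu$, and the trivial bound $|P_\om f|\le P_\om^+(|f|)$ returns $(iii)$.

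The direction $(ii)\Rightarrow(iv)$ I would prove by testing on $f_N(\z)=\z^N g_N(|\z|)$ with $g_N\ge 0$. Angular orthogonality forces $P_\om(f_N)(z)=c_N z^N$ with
$$
c_N=\frac{1}{\om_{2N+1}}\int_0^1 r^{2N+1}g_N(r)\om(r)\,dr,
$$
and for $N\ge k$ the low-order terms in $\|P_\om(f_N)\|_{D^p_{\nu,k}}$ vanish. An elementary moment computation using $\nu\in\M$ gives $\|z^N\|_{D^p_{\nu,k}}^p\asymp\nu_{Np+1}$ for large $N$. Optimizing $g_N$ by Hölder's inequality, with the conjugate factor $\sigma=\om^{p'}\nu^{-p'/p}$ playing the dual role, produces the weight $\sigma_{Np'+1}^{1/p'}$ and yields
$\sup_N \nu_{Np+1}^{1/p}\sigma_{Np'+1}^{1/p'}/\om_{2N+1}\le C\,\|P_\om\|_{L^p_\nu\to D^p_{\nu,k}}$, which is $(iv)$. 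The companion $(i)\Rightarrow(ii)$ is routine once one differentiates under the integral to obtain
$$
(P_\om f)^{(k)}(z)=\int_\D f(\z)\,(B_\z^\om)^{(k)}(z)\,\om(\z)\,dA(\z),
$$
so that $(1-|z|)^k|(P_\om f)^{(k)}(z)|\le T_{\om,k}^+(|f|)(z)$; the Taylor seminorms at the origin are controlled by a Hölder estimate whose constants depend on the moments $\sigma_{jp'+1}$ for $j<k$, and their finiteness can be verified either from $(i)$ directly by a parallel test or extracted a posteriori from $(iv)$.

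The essentially new analytic content is the implication $(iv)\Rightarrow(i)$. The plan is to dominate $T_{\om,k}^+$ by $P_\om^+$ via a pointwise estimate of the form
$$
(1-|z|)^k\bigl|(B_\z^\om)^{(k)}(z)\bigr|\;\lesssim\;|B_\z^\om(z)|,\qquad z,\z\in\D,
$$
valid for $\om\in\DD$; combined with Theorem~\ref{teorema 1}, which under $(iv)$ and $\nu\in\M$ gives $P_\om^+:L^p_\nu\to L^p_\nu$ bounded, this closes the cycle. This last estimate is the main obstacle I foresee. Because $B_\z^\om$ has no closed form, the comparison must be extracted from the series $B_\z^\om(z)=\sum_n(\overline{\z}z)^n/(2\om_{2n+1})$ using the doubling of the moment sequence $\{\om_{2n+1}\}$ forced by $\om\in\DD$; one must show that the polynomial factor $n(n-1)\cdots(n-k+1)$ picked up by the $k$-th derivative is absorbed by the boundary factor $(1-|z|)^k$ uniformly in $z$ and $\z$. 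If the pointwise bound proves too strong, I would fall back on an integrated analogue averaged over circles $|z|=r$, or alternatively retrace the Schur-type argument behind Theorem~\ref{teorema 1} directly for the shifted kernel defining $T_{\om,k}^+$, whose effective coefficients $n(n-1)\cdots(n-k+1)/\om_{2n+1}$ remain moment-like in the doubling class.
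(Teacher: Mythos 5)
Your handling of the peripheral implications is essentially fine. The chain (iii)$\Rightarrow$(iv)$\Rightarrow$(v)$\Rightarrow$(iii) via Proposition~\ref{Proy. acotada cond. pesos JouniINTRO}, Lemma~\ref{lema Ap+ cond pesos}(ii) and Theorem~\ref{teorema 1} is what the paper does, and your direct test-function argument for (ii)$\Rightarrow$(iv) is a legitimate alternative to the paper's softer route (which instead deduces from (ii) that $\|f\|_{D^p_{\nu,k}}\lesssim\|f\|_{A^p_\nu}$, hence $\nu\in\DD$ by \cite[Theorem~6]{PR19}, hence $\nu\in\DDD$ and $D^p_{\nu,k}=A^p_\nu$, giving (iii)); note that you only need the lower bound $N^{kp}\int_0^1 r^{(N-k)p+1}(1-r)^{kp}\nu(r)\,dr\gtrsim\nu_{Np+1}$, which does hold for $\nu\in\M$, not the two-sided asymptotics you assert.

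The genuine gap is in (iv)$\Rightarrow$(i), the only step with real analytic content. Your primary plan rests on the pointwise inequality $(1-|z|)^k\bigl|(B^\om_\z)^{(k)}(z)\bigr|\lesssim|B^\om_\z(z)|$, which is not available for general $\om\in\DD$: writing $B^\om_\z(z)=g(\overline{\z}z)$ with $g(w)=\sum_n w^n/(2\om_{2n+1})$, the inequality would force every zero of $g$ in $\D$ to be a zero of $g^{(k)}$, and nothing in the class $\DD$ rules out zeros of $g$; more fundamentally, no pointwise lower bounds for $|B^\om_\z(z)|$ off the positive axis are known in this generality --- the sharp information available is only for the integral means, $M_1\bigl(r,(B^\om_z)^{(k)}\bigr)\asymp\int_0^{r|z|}\frac{dt}{(1-t)^{k+1}\omg(t)}+1$, from \cite[Theorem~1]{PelRatproj}. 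The ``integrated analogue averaged over circles'' is indeed true (compare the displayed asymptotics for $k$ and for $0$), but an $L^1(d\theta)$ comparison of kernels does not dominate one positive integral operator by another, so it does not transfer the boundedness of $P^+_\om$ to $T^+_{\om,k}$. Only your last fallback --- running the Schur/weighted-duality argument directly on the kernel $(1-|z|)^k|(B^\om_\z)^{(k)}(z)|\om(\z)$ --- actually works, and that is precisely the content of Proposition~\ref{Suficiencia acotacion Twk+}: one tests against $h(\z)=\nu(\z)^{1/p}\bigl(\int_{|\z|}^1\sigma(s)s\,ds\bigr)^{1/(pp')}$ and closes the estimate using the $M_1$-asymptotics together with $A_p(\om,\nu)<\infty$ and $M_p(\om,\nu)<\infty$. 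That argument needs only $\om\in\DD$ and $A_p(\om,\nu)<\infty$, not $\nu\in\M$, whereas your detour through $P^+_\om$ could not: $T^+_{\om,k}$ can be bounded while $P^+_\om$ is not (e.g.\ $\om=\nu\in\DD\setminus\M$). As written, the crucial implication is announced rather than proved.
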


As for the proof of Theorem~\ref{teorema 2} let us mention that the proof of (ii)$\Rightarrow$(iii) is based in the fact that the inequality
	$$
	\| f\|_{D^p_{\nu, k}}\le C \|f \|_{A^p_\nu},\quad f \in \H(\D),
	$$
holds if and only if $\nu\in\DD$ \cite[Theorem~6]{PR19}. On the other hand, the implication  (v)$\Rightarrow$(i) uses  
precise asymptotic estimates of the integral means of the $k$th-derivative of the Bergman reproducing kernel $B^\omega_z$ obtained in \cite[Theorem~1]{PelRatproj}.

As we already mentioned, the boundedness of Bergman projections on Lebesgue spaces~$L^p_\nu$ is really useful to tackle natural questions on operator theory on weighted Bergman spaces $A^p_\nu$. The following interesting result shows an indirect and curious way to obtain this kind of information.

\begin{theorem}\label{th:curioso}
Let $1<p<\infty$, and let $\om$ and $\nu$ be radial weights. If $P_{\om}:L^p_{\nu}\to D^p_{\nu,k}$ is bounded, then $P_{\nu}:L^p_{\nu}\to D^p_{\nu, k}$ and $P_{\nu}:L^p_{\nu}\to L^p_{\nu}$ are bounded.
\end{theorem}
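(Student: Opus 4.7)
My plan is to reduce Theorem~\ref{th:curioso} to Theorem~\ref{teorema 2} applied with $\om$ replaced by $\nu$; this will require establishing $\nu\in\DD$, $\nu\in\M$, and the intrinsic moment condition $D_p(\nu,\nu)<\infty$. The first step will be to exploit that Bergman projections reproduce analytic polynomials. Indeed, from the series representation~\eqref{eq:B} one computes termwise that $P_\om(g)=g$ for every polynomial $g$. The hypothesis therefore yields
$\|g\|_{D^p_{\nu,k}}=\|P_\om(g)\|_{D^p_{\nu,k}}\le \|P_\om\|_{L^p_\nu\to D^p_{\nu,k}}\,\|g\|_{A^p_\nu}$
for every polynomial, and by density this extends to a continuous embedding $A^p_\nu\hookrightarrow D^p_{\nu,k}$. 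By the result of~\cite{PR19} recalled just after Theorem~\ref{teorema 2}, this embedding is equivalent to $\nu\in\DD$.

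Next I will derive moment conditions on~$\nu$ by testing the hypothesis on $f_n(\zeta)=\zeta^n h_n(\zeta)$ with radial~$h_n$. A direct application of~\eqref{eq:B} gives $P_\om(\zeta^n h_n)(z)=z^n (h_n\om)_{2n+1}/\om_{2n+1}$. With the choice $h_n(r)=r^{s_n}\,\sigma(r)/\om(r)$ where $s_n=n(2-p)/(p-1)$, the exponents are tuned so that $2n+1+s_n = np+1+p s_n = np'+1$, which gives $(h_n\om)_{2n+1}=\sigma_{np'+1}$ and $\|f_n\|^p_{L^p_\nu}=2\sigma_{np'+1}$ by straightforward computation. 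Combined with the asymptotic $\|z^n\|^p_{D^p_{\nu,k}}\asymp \nu_{np+1}$, which holds once $\nu\in\DD$ is in force (Step~1), the hypothesis delivers the mixed Dostani\'c type condition $D_p(\om,\nu)<\infty$. From here one needs to extract the intrinsic conditions $\nu\in\M$ and $D_p(\nu,\nu)<\infty$.

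Once $\nu\in\DD\cap\M=\DDD$ and $D_p(\nu,\nu)<\infty$ are in hand, Theorem~\ref{teorema 2} applied with $\om=\nu$ gives the equivalence of (ii)~$P_\nu:L^p_\nu\to D^p_{\nu,k}$ bounded, (iii)~$P_\nu:L^p_\nu\to L^p_\nu$ bounded, and (iv)~$D_p(\nu,\nu)<\infty$, so both conclusions of Theorem~\ref{th:curioso} follow simultaneously.

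The main obstacle will be the transfer in the second step from the mixed condition $D_p(\om,\nu)<\infty$ to the intrinsic conditions $\nu\in\M$ and $D_p(\nu,\nu)<\infty$ on~$\nu$ alone. Because the hypothesis couples $\om$ and $\nu$ while the conclusion depends only on~$\nu$, isolating the $\nu$-moments appearing in $D_p(\nu,\nu)$ directly from the relation $\nu_{np+1}^{1/p}\sigma_{np'+1}^{1/p'}\le C\om_{2n+1}$ will require additional, cleverly chosen test functions (for instance of the mixed type $\zeta^{n+m}\bar{\zeta}^m$, which project by~\eqref{eq:B} to $z^n\om_{n+2m+1}/\om_{2n+1}$) together with an argument exploiting the $\nu\in\DD$ structure established in Step~1 to decouple the roles of $\om$ and $\nu$.
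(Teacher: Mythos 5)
There is a genuine gap in your plan, and it lies exactly where you flag the ``main obstacle'': the intermediate claims $\nu\in\M$ and the reduction to Theorem~\ref{teorema 2} cannot be salvaged, because $\nu\in\M$ simply does not follow from the hypothesis. Take any $\nu\in\DD\setminus\M$ (e.g.\ a rapidly increasing weight such as $\nu(z)=(1-|z|^2)^{-1}\bigl(\log\frac{e}{1-|z|^2}\bigr)^{-\alpha}$, $\alpha>1$) and put $\om=\nu$: by \cite[Theorems~7 and~11]{PR19} the operator $P_\nu:L^p_\nu\to D^p_{\nu,k}$ is bounded for every $\nu\in\DD$, so the hypothesis of Theorem~\ref{th:curioso} is satisfied while $\nu\notin\M$. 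Consequently Theorem~\ref{teorema 2}, which carries $\nu\in\M$ as a standing assumption, is not applicable in precisely the cases that make this theorem interesting; the membership $\nu\in\M$ is tied to the boundedness of the \emph{maximal} operators $P^+_\om$ and $T^+_{\om,k}$ (see \cite[Proposition~20 and Theorem~20]{PR19} and the remark preceding the proof of Theorem~\ref{th:curioso}), not of $P_\om$ itself, and no amount of additional test functions will produce it. For the same reason the decoupling you hope to achieve from $D_p(\om,\nu)<\infty$ is aiming at a false target.

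Your Step~1 is exactly the paper's argument: $P_\om$ reproduces $A^p_\nu$, hence $\|f\|_{D^p_{\nu,k}}\lesssim\|f\|_{A^p_\nu}$, hence $\nu\in\DD$ by \cite[Theorem~6]{PR19}. The point you are missing is that this is already enough: the paper then concludes directly by citing \cite[Theorems~7 and~11]{PR19}, which state that for any $\nu\in\DD$ and $1<p<\infty$ the projection $P_\nu$ is bounded both on $L^p_\nu$ and from $L^p_\nu$ into $D^p_{\nu,k}$, with no $\M$ or $\Dd$ condition required. (Incidentally, once $\nu\in\DD$ the condition $D_p(\nu,\nu)<\infty$ is automatic from Lemma~\ref{Lemma:simple}, since $\nu_{np+1}\asymp\nu_{2n+1}\asymp\nu_{np'+1}$; so your Step~2 would in any case only be recovering information already contained in Step~1.) Also note a smaller technical worry in Step~2: your lower bound needs $\|z^n\|_{D^p_{\nu,k}}^p\gtrsim\nu_{np+1}$, whereas for $\nu\in\DD$ alone \cite[Theorem~6]{PR19} only gives the inequality $\|\cdot\|_{D^p_{\nu,k}}\lesssim\|\cdot\|_{A^p_\nu}$, i.e.\ the wrong direction; the two-sided comparison $D^p_{\nu,k}=A^p_\nu$ requires $\nu\in\DDD$.
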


What Theorem~\ref{th:curioso} practically says is that if there exists a radial weight $\om$ such that $P_{\om}:L^p_{\nu}\to D^p_{\nu,k}$ is bounded, then $P_{\nu}$ must be bounded on $L^p_{\nu}$. In other words, in this setting it is the choice $\om=\nu$ that makes the inequality $\|P_\om(f)\|_{L^p_\nu}\lesssim\|f\|_{L^p_\nu}$ easiest to achieve.

Our last main result concerns exponential weights. The proof is based on a proper application of Proposition~\ref{Proy. acotada cond. pesos JouniINTRO}.

\begin{theorem}\label{th:exponenciales}
Let $1<p<\infty$, and let
	$$
	\nu (r)=\exp \left(-\frac{\a}{(1-r^l)^{\b}} \right)\quad\text{ and }\quad \om(r)= \exp \left(-\frac{\at}{(1-r^{\widetilde{l}})^{\bt}} \right), \quad 0\leq r <1,
	$$
where $0<\a,\at,l,\widetilde{l}<\infty$ and $0<\b,\bt\le1$. Then $P_\omega:L^p_\nu\to L^p_\nu$ is bounded if and only if $\beta=\bt$ and 
$\at= \frac{2\alpha}{p} \left( \frac{\widetilde{l}}{l}\right)^\beta$.
\end{theorem}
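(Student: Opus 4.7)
The plan is to exploit Proposition~\ref{Proy. acotada cond. pesos JouniINTRO} by computing precise Laplace-type asymptotics for the moments $\nu_{np+1}$, $\om_{2n+1}$ and $\sigma_{np'+1}$ as $n\to\infty$, and to extract the identities $\beta=\bt$ and $\at=(2\alpha/p)(\widetilde{l}/l)^\beta$ from the necessary condition $D_p(\om,\nu)<\infty$. The converse direction is then reduced to the fact that, under these identities, $\om$ and $\nu^{2/p}$ share the same leading exponential behaviour at $\partial\D$ (up to a multiplicative constant), a regime in which the boundedness of $P_\om:L^p_\nu\to L^p_\nu$ is known for exponential-type weights via Schur-type arguments.

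The key technical input is a saddle-point estimate: for a weight $\mu(r)=\exp(-A/(1-r^L)^\gamma)$ with $A,L>0$ and $0<\gamma\le1$, after the substitution $s=1-r$ and the observation $1-(1-s)^L\sim Ls$ as $s\to 0^+$, the moment $\mu_x=\int_0^1 r^x\mu(r)\,dr$ is controlled by the infimum over $s>0$ of $xs+AL^{-\gamma}s^{-\gamma}$, attained at $s_\ast\asymp x^{-1/(1+\gamma)}$, whence
$$
\mu_x=h_\mu(x)\exp\!\left(-\kappa(A,L,\gamma)\,x^{\gamma/(1+\gamma)}\right),\qquad \kappa(A,L,\gamma)=(1+\gamma)\left(\tfrac{A}{L^\gamma\gamma^\gamma}\right)^{1/(1+\gamma)},
$$
with an explicit slowly-varying factor $h_\mu$ of the form $C\,x^{-(2+\gamma)/[2(1+\gamma)]}$. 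With this in hand, I would first rule out $\bt\ne\beta$. If $\bt<\beta$, the positive term $(p'/p)\alpha(1-r^l)^{-\beta}$ in $\log\sigma$ overwhelms $-p'\at(1-r^{\widetilde{l}})^{-\bt}$, so $\sigma\notin L^1$ and $D_p=\infty$ trivially. If $\bt>\beta$, the rates $n^{\bt/(1+\bt)}$ from $\om_{2n+1}$ and $\sigma_{np'+1}$ dominate the rate $n^{\beta/(1+\beta)}$ from $\nu_{np+1}$; using the scaling identity $\kappa(p'A,L,\bt)=(p')^{1/(1+\bt)}\kappa(A,L,\bt)$, the net exponent in $\log\bigl(\nu_{np+1}^{1/p}\sigma_{np'+1}^{1/p'}/\om_{2n+1}\bigr)$ simplifies to $\bigl(2^{\bt/(1+\bt)}-1\bigr)\kappa(\at,\widetilde{l},\bt)\,n^{\bt/(1+\bt)}>0$, once again forcing $D_p=\infty$.

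In the critical case $\beta=\bt$, set $A_\nu=\alpha l^{-\beta}$, $A_\om=\at\widetilde{l}^{-\beta}$, $u=A_\nu/p$ and $v=A_\om-u=A_\sigma/p'$, where one also verifies $A_\sigma>0$ is forced. All three moments now have rate $n^{\beta/(1+\beta)}$, the algebraic prefactor $h_\nu(np)^{1/p}h_\sigma(np')^{1/p'}/h_\om(2n)$ collapses to a constant in $n$ thanks to $1/p+1/p'=1$, and the coefficient of $n^{\beta/(1+\beta)}$ in the logarithm of the quotient reduces (up to the common positive constant $(1+\beta)\beta^{-\beta/(1+\beta)}$) to
$$
2^{\beta/(1+\beta)}(u+v)^{1/(1+\beta)}-u^{1/(1+\beta)}-v^{1/(1+\beta)}.
$$
Strict concavity of $x\mapsto x^{1/(1+\beta)}$ on $(0,\infty)$ (Jensen for two points) shows this quantity is nonnegative and vanishes iff $u=v$, i.e.\ $A_\nu/p=A_\om-A_\nu/p$, which rearranges exactly to $\at\widetilde{l}^{-\beta}=(2\alpha/p)l^{-\beta}$, that is, $\at=(2\alpha/p)(\widetilde{l}/l)^\beta$. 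Any other configuration produces exponential growth in $n$ and violates $D_p<\infty$.

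The main obstacle is to carry out the Laplace-type asymptotics with enough control on the remainder to legitimately add and subtract the exponents of the three moments and to confirm the cancellation of the algebraic prefactors in the critical case; once these quantitative estimates are in place, the rest of the argument reduces to a comparison of the two exponential scales $n^{\beta/(1+\beta)}$ and $n^{\bt/(1+\bt)}$ together with the concavity inequality above, which is elementary.
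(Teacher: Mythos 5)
Your proposal is correct and follows essentially the same route as the paper: necessity via the moment condition $D_p(\om,\nu)<\infty$ from Proposition~\ref{Proy. acotada cond. pesos JouniINTRO} combined with Laplace-type asymptotics for the moments (which the paper cites from Bonet--Lusky--Taskinen rather than re-deriving by saddle point) and the same two-point concavity inequality in the critical case $\beta=\bt$, with sufficiency delegated to the known exponential-weight results. The only cosmetic difference is that you keep $l\ne\widetilde{l}$ throughout via the effective constants $\alpha l^{-\beta}$ and $\at\,\widetilde{l}^{-\beta}$, whereas the paper first normalizes to $l=\widetilde{l}$ by replacing $\nu$ with a comparable weight.
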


The statement in Theorem~\ref{th:exponenciales} is essentially known by \cite{BonetLuskyTaskinen,CP,HuLvSc,ZeyTams2012}. Our contribution consists of completing the picture for any $0<l,\widetilde{l}<\infty$ and showing the usefulness of the condition $D_p(\om,\nu)<\infty$ in the setting of exponential weights.

The rest of the paper is organized as follows. Section~\ref{s1} is devoted to proving Proposition~\ref{Proy. acotada cond. pesos JouniINTRO} 
and some results involving the conditions $D_p(\omega,\nu)<\infty$ and $A_p(\omega,\nu)<\infty$, and the classes of radial weights $\DD$ and $\M$. Theorems~\ref{th:main}, \ref{teorema 1}, \ref{teorema 2} and \ref{th:curioso} are proved in Section~\ref{s2}, while the proof of  Theorem~\ref{th:exponenciales} is given in Section~\ref{s3}.

To this end a couple of words about the notation used. The letter $C=C(\cdot)$ will denote an absolute constant whose value depends on the parameters indicated in the parenthesis, and may change from one occurrence to another.
We will use the notation $a\lesssim b$ if there exists a constant
$C=C(\cdot)>0$ such that $a\le Cb$, and $a\gtrsim b$ is understood
in an analogous manner. In particular, if $a\lesssim b$ and
$a\gtrsim b$, then we write $a\asymp b$ and say that $a$ and $b$ are comparable.

\section{Necessary conditions}\label{s1}

We begin this section by a simple lemma which will serve us while obtaining necessary conditions for the boundedness of $P_\om:L^p_\nu\to L^p_\nu$.

\begin{lemma}\label{Lemma:simple}
Let $\om$ be a radial weight. Then $\om\in\DD$ if and only if for some (equivalently for each) $1<q<\infty$ there exists a constant $C=C(\om,q)\ge1$ such that 
	\begin{equation}\label{eq:simple}
	\om_x\le C\om_{qx},\quad 1\le x<\infty.
	\end{equation}
\end{lemma}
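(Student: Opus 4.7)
The proof relies on two unconditional estimates: for $x \ge 2$ and $r \in (0,1)$,
$$
\om_x \;\ge\; (1-1/x)^x\,\widehat{\om}(1-1/x)\;\ge\;\tfrac14\widehat{\om}(1-1/x),\qquad \widehat{\om}(r)\;\le\;r^{-x}\,\om_x,
$$
the first by restricting the defining integral of $\om_x$ to $[1-1/x,1]$, the second from $u\ge r\Rightarrow (u/r)^x\ge 1$. In particular, $\widehat{\om}(r)\le 4\om_{1/(1-r)}$ for $r\in[1/2,1)$, since $r^{1/(1-r)}\ge 1/4$ on this range.

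\emph{From $\om\in\DD$ to the moment inequality.} The main task is the reverse comparison $\om_x\le C\widehat{\om}(1-1/x)$ for $x\ge 2$. Splitting $\om_x=\int_0^{1-1/x}+\int_{1-1/x}^1$, the second integral is trivially at most $\widehat{\om}(1-1/x)$. For the first, I would decompose $[0,1-1/x]$ into dyadic intervals $[r_{k+1},r_k]$ with $r_k=1-2^k/x$ (for those $k$ with $r_k\ge 0$); since $(1+r_{k+1})/2=r_k$, iterating $\om\in\DD$ gives $\widehat{\om}(r_{k+1})\le C^{k+1}\widehat{\om}(1-1/x)$, and combined with $r_k^x\le e^{-2^k}$ the geometric-superexponential series $\sum_k C^{k+1}e^{-2^k}$ converges, yielding the claim. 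Hence $\om_x\asymp\widehat{\om}(1-1/x)$ for $x\ge 2$; iterating $\DD$ once more and picking $n$ with $2^n\ge q$ gives $\widehat{\om}(1-1/x)\le C^n\widehat{\om}(1-1/(qx))\le 4C^n\om_{qx}$, so $\om_x\le C'\om_{qx}$. The range $x\in[1,2]$ is trivial by positivity of $\om_1$ and $\om_{2q}$.

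\emph{From the moment inequality to $\om\in\DD$.} Assume $\om_x\le C\om_{qx}$ for $x\ge 1$. On $[0,1/2]$ the doubling is trivial, so fix $r\in[1/2,1)$ and set $x=1/(1-r)\ge 2$, $s=(1+r)/2=1-1/(2x)$. Splitting $\om_{qx}$ at $s$ and using $(1-1/(2x))^{qx}\le e^{-q/2}$ gives $\om_{qx}\le e^{-q/2}\widehat{\om}(0)+\widehat{\om}((1+r)/2)$; combined with $\widehat{\om}(r)\le 4\om_x\le 4C\om_{qx}$ this yields
$$
\widehat{\om}(r)\;\le\;4Ce^{-q/2}\widehat{\om}(0)+4C\,\widehat{\om}((1+r)/2).\qquad(\ast)
$$
When $\widehat{\om}((1+r)/2)\ge e^{-q/2}\widehat{\om}(0)$, $(\ast)$ immediately gives $\widehat{\om}(r)\le 8C\widehat{\om}((1+r)/2)$. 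In the opposite regime, $(\ast)$ is applied with $q,C$ replaced by the iterated moment values $q^m,C^m$, and the polynomial lower bound $\om_y\ge c\,y^{-\log_q C}$ (obtained from iterating $\om_y\ge C^{-n}\om_{y/q^n}$ down to $y/q^n=1$) is used to pick $m$ absorbing the error $4C^me^{-q^m/2}\widehat{\om}(0)$ into $\widehat{\om}(r)$. The main obstacle is keeping the resulting doubling constant uniform in $r$: one must balance the iterated factor $C^m$ against the superexponentially small $e^{-q^m/2}$, exploiting that the moment condition prevents faster-than-polynomial decay of $\om_y$.
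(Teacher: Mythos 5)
Your forward direction is sound: the dyadic decomposition of $[0,1-1/x]$ at the points $r_k=1-2^k/x$, together with the convergence of $\sum_k C^{k+1}e^{-2^k}$, yields $\om_x\lesssim\widehat{\om}(1-1/x)$ for $\om\in\DD$, and this is a self-contained substitute for the paper's appeal to \cite[Lemma~2.1(x)]{P}. (You should add a word about the final interval $[0,r_K]$, where $r_{K+1}<0$: there one uses $\widehat{\om}(0)\le C^{K+1}\widehat{\om}(1-1/x)$ and $r_K^x\le e^{-x/2}$, which still fits the convergent series.)

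The backward direction, however, has a genuine gap, and it is precisely the obstacle you flag without resolving. In $(\ast)$ and its iterates the error term is $4C^me^{-q^m/2}\widehat{\om}(0)$, and you propose to absorb it into $\widehat{\om}(r)$ while keeping $C^m$ uniformly bounded. These two requirements are incompatible: if $C^m\le M$ for all $r$, then $m$ and hence $q^m$ are bounded, so the error term stays above a fixed positive constant, while $\widehat{\om}(r)\to0$ as $r\to1^-$. Conversely, your polynomial lower bound $\om_y\gtrsim y^{-\gamma}$ only gives $\widehat{\om}(r)\gtrsim(1-r)^{\gamma}\bigl(\log\frac{e}{1-r}\bigr)^{-\gamma}$, so absorption forces $q^m\gtrsim\log\frac{e}{1-r}$ and therefore $C^m\asymp\bigl(\log\frac{e}{1-r}\bigr)^{\gamma'}\to\infty$; you end up with a logarithmically degraded doubling inequality, not $\om\in\DD$. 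The defect is structural: estimating $\int_0^{s}u^{qx}\om(u)\,du$ by $s^{qx}\widehat{\om}(0)$ discards all of the decay of $u^{qx}$ on $[0,s]$ and compares the error to the fixed quantity $\widehat{\om}(0)$ instead of to a quantity that decays along with what you are trying to bound. The repair is to absorb into the moment itself: for $u\le1-K/x$ one has $u^{x}=u^{x/q}u^{x(1-1/q)}\le e^{-K(1-1/q)}u^{x/q}$, whence
$$
\int_0^{1-K/x}u^{x}\om(u)\,du\le e^{-K(1-1/q)}\om_{x/q}\le e^{-K(1-1/q)}C\,\om_{x}\le\tfrac12\om_x
$$
for the fixed choice $K=\log(2C)/(1-1/q)$, giving $\om_x\le2\widehat{\om}(1-K/x)$ for all $x>\max(q,K)$. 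Then
$$
\widehat{\om}\left(1-\tfrac1x\right)\le4\om_x\le4C^j\om_{q^jx}\le8C^j\widehat{\om}\left(1-\tfrac{K}{q^jx}\right)\le8C^j\widehat{\om}\left(1-\tfrac1{2x}\right)
$$
as soon as $q^j\ge2K$, which is $\om\in\DD$ with a constant depending only on $C$ and $q$. (The paper sidesteps all of this by quoting the moment characterizations of $\DD$ from \cite[Lemma~2.1]{P}; if you want a self-contained proof, the absorption step above is the missing ingredient.)
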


\begin{proof}
If $\om\in\DD$ then there exist constants $C=C(\om)\ge1$ and $\eta=\eta(\om)>0$ such that $\om_x\le C\left(\frac{y}{x}\right)^\eta\om_y$ for all $1\le x\le y<\infty$ by \cite[Lemma~2.1(x)]{P}. By choosing $y=qx$ we obtain \eqref{eq:simple}, with $Cq^\eta$ in place of $C$, for each $1<q<\infty$. 

Conversely, assume that \eqref{eq:simple} is satisfied for some $1<q<\infty$. If $2\le q<\infty$, then $\om_x\le C\om_{qx}\le C\om_{2x}$ for all $1\le x<\infty$, and since $\widehat{\om}\left(1-\frac1x\right)\asymp\om_x$, as $x\to\infty$, by \cite[Lemma~2.1(vi)]{P}, we deduce $\om\in\DD$. If $1<q<2$, we fix $n\in\N$ such that $q^n\ge2$. Then $\om_x\le C\om_{qx}\le\cdots\le C^n\om_{q^nx}\le C^n\om_{2x}$ for all $1\le x<\infty$, and again it follows that $\om\in\DD$ by \cite[Lemma~2.1(vi)]{P}.
\end{proof}

The next result contains Proposition~\ref{Proy. acotada cond. pesos JouniINTRO}. Recall that 
	$$
	D_p(\om,\nu)
	=\sup_{n\in\N\cup\{0\}}\frac{\left(\nu_{np+1}\right)^\frac1p\left(\sigma_{np'+1}\right)^\frac1{p'}}{\om_{2n+1}},\quad 
	\sigma=\left(\frac{\om}{\nu^\frac1p}\right)^{p'}=\frac{\om^\frac{p}{p-1}}{\nu^\frac1{p-1}}.
	$$
	
\begin{proposition}
\label{Proy. acotada cond. pesos Jouni} Let $\om$ and $\nu$ be radial weights and $1<p<\infty$. 
		\begin{itemize}
		\item[\rm(i)]
		If $P_\om:L^p_\nu\to L^p_\nu$ is bounded, then $D_p(\om,\nu)\le\|P_\om\|_{L^{p}_\nu\to L^{p}_\nu}$.
		\item[\rm(ii)] If $ D_p(\om,\nu)<\infty$, then $\om\in\DD\,\Leftrightarrow\,\nu\in\DD$. Moreover, if $\om\in\DD$, then $\sigma\in\DD$.
		\item[\rm(iii)] If $ D_p(\om,\nu)<\infty$ and $\nu\in\M$, then $\omega\in\M$.
		\end{itemize}
\end{proposition}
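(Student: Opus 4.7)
The plan is to prove (i) by constructing explicit test functions, and to derive (ii) and (iii) from an asymptotic equivalence between $\om_{2n+1}$ and $\nu_{np+1}^{1/p}\sigma_{np'+1}^{1/p'}$ that follows from $D_p(\om,\nu)<\infty$. For (i), I would test $P_\om$ on $f(\zeta)=\zeta^n g(|\zeta|)$. Using the series \eqref{eq:B} and the angular orthogonality of monomials, one obtains
\[
P_\om(f)(z)=\frac{z^n}{\om_{2n+1}}\int_0^1 r^{2n+1}g(r)\om(r)\,dr,\qquad \|f\|_{L^p_\nu}^p=2\int_0^1 r^{np+1}g(r)^p\nu(r)\,dr.
\]
To saturate H\"older's inequality applied to the split $r^{2n+1}=r^{(np+1)/p}r^{(np'+1)/p'}$ with densities $\nu^{1/p}$ and $\om\nu^{-1/p}=\sigma^{1/p'}$, the natural choice is $g(r)=(\om(r)/\nu(r))^{1/(p-1)}\,r^{n(2-p)/(p-1)}$. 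A direct calculation yields $\int_0^1 r^{2n+1}g\om\,dr=\sigma_{np'+1}=\int_0^1 r^{np+1}g^p\nu\,dr$, whence $\|P_\om f\|_{L^p_\nu}/\|f\|_{L^p_\nu}=\nu_{np+1}^{1/p}\sigma_{np'+1}^{1/p'}/\om_{2n+1}$. Taking the supremum over $n$ gives (i); if some $\sigma_{np'+1}$ is infinite, truncate $g$ to $[0,1-\varepsilon]$ and let $\varepsilon\to 0$.

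The key input for (ii) and (iii) is the pointwise identity $\om=\nu^{1/p}\sigma^{1/p'}$. H\"older with the same split gives $\om_{2n+1}\le\nu_{np+1}^{1/p}\sigma_{np'+1}^{1/p'}$, while $D_p(\om,\nu)<\infty$ supplies the reverse, yielding the equivalence
\[
\om_{2n+1}\asymp\nu_{np+1}^{1/p}\sigma_{np'+1}^{1/p'},\quad n\in\N\cup\{0\}.
\]
Substituting $\sigma=\om^{p'}/\nu^{p'/p}$ into the quantity built from $(\om,\sigma,p')$ returns $\nu$, so $D_p(\om,\nu)=D_{p'}(\om,\sigma)$, meaning that $\nu$ and $\sigma$ play symmetric roles under the swap $p\leftrightarrow p'$.

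For (ii), assume $\om\in\DD$. Lemma~\ref{Lemma:simple} gives $\om_{2n+1}\le C\om_{4n+1}$; combining this with the equivalence at $n$ and $2n$ and cancelling $\sigma_{np'+1}^{1/p'}$ using the monotonicity $\sigma_{2np'+1}\le\sigma_{np'+1}$, one obtains $\nu_{np+1}\le C'\nu_{2np+1}$, and extending from $y=np+1$ to all $y\ge 1$ via monotonicity and Lemma~\ref{Lemma:simple} yields $\nu\in\DD$. The ``moreover'' claim $\om\in\DD\Rightarrow\sigma\in\DD$ is the mirror statement, immediate from the symmetry. The converse $\nu\in\DD\Rightarrow\om\in\DD$ is the \emph{main obstacle}: the monotonicity of $\sigma$ now goes the wrong way, so one must either first establish $\nu\in\DD\Rightarrow\sigma\in\DD$ under $D_p<\infty$ (after which both $\nu,\sigma\in\DD$ together with the equivalence deliver $\om\in\DD$), or invoke Lemma~\ref{Lemma:simple} at a scale $q$ chosen so that the residual $\sigma$-ratio is absorbed.

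Part (iii) follows the same template more easily, since $\M$ iterates: for any target $C_0>0$ one can find $K_0>1$ with $\nu_y\ge C_0\nu_{K_0 y}$. Choose $C_0>D_p^p$, apply this to the equivalence at $n$ and $m=\lceil K_0 n+(K_0-1)/p\rceil$, and use $\sigma_{mp'+1}\le\sigma_{np'+1}$ now in the \emph{favorable} direction to obtain $\om_{2n+1}\ge D_p^{-1}C_0^{1/p}\om_{2m+1}$ with $D_p^{-1}C_0^{1/p}>1$. Monotonicity extends this from $x=2n+1$ to all $x\ge 1$, possibly at the cost of enlarging the scale, yielding $\om\in\M$.
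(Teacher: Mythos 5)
Your part (i) is correct and coincides with the paper's argument: the same test functions $\zeta^n\varphi(|\zeta|)$ and the same extremal choice of the radial factor; your truncation device is a legitimate substitute for the paper's citation guaranteeing $\sigma\in L^1$. Part (iii), the implication $\om\in\DD\Rightarrow\nu\in\DD$ in (ii), and the ``moreover'' statement via the symmetry $D_p(\om,\nu)=D_{p'}(\om,\sigma)$ are also correct and essentially follow the paper's proofs.

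The genuine gap is the converse in (ii), namely $\nu\in\DD\Rightarrow\om\in\DD$, which you flag as ``the main obstacle'' but do not resolve. Your first suggested route --- first proving $\nu\in\DD\Rightarrow\sigma\in\DD$ --- is circular: from the equivalence $\om_{2n+1}\asymp\nu_{np+1}^{1/p}\sigma_{np'+1}^{1/p'}$ and $\nu_{np+1}\asymp\nu_{2np+1}$ one gets $\sigma_{np'+1}\lesssim\sigma_{2np'+1}$ if and only if $\om_{2n+1}\lesssim\om_{4n+1}$, which is precisely the conclusion sought. Your second route (choosing the scale $q$ in Lemma~\ref{Lemma:simple} so that ``the residual $\sigma$-ratio is absorbed'') cannot work as stated, because the troublesome factor $\left(\sigma_{np'+1}/\sigma_{2np'+1}\right)^{1/p'}\ge1$ is not controlled by any hypothesis at any scale. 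The missing idea is to change the H\"older splitting rather than the scale: for $1-p'<y<2-p'$ one has, pointwise, $r^{n(p'+y)+1}\om=\left(r^{np'+1}\sigma\right)^{1/p'}\left(r^{n(p'+py)+1}\nu\right)^{1/p}$, whence
\begin{equation*}
\om_{n(p'+y)+1}\le\left(\sigma_{np'+1}\right)^{1/p'}\left(\nu_{n(p'+py)+1}\right)^{1/p}\le D_p(\om,\nu)\,\om_{2n+1}\left(\frac{\nu_{n(p'+py)+1}}{\nu_{np+1}}\right)^{1/p}.
\end{equation*}
Here the $\sigma$-moment appears at exactly the index occurring in $D_p(\om,\nu)$ and cancels against the moment condition, and only the $\nu$-index is shifted; since $p'+py<p$, the remaining $\nu$-quotient is bounded by $\nu\in\DD$ and Lemma~\ref{Lemma:simple}, and since $p'+y<2$ the resulting bound $\om_{n(p'+y)+1}\lesssim\om_{2n+1}$ is a doubling condition yielding $\om\in\DD$. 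Without this (or an equivalent) device your proof of (ii) is incomplete.
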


\begin{proof}
(i). Let $f_n(\z)=\zeta^n \varphi(\z)$, where $\varphi$ is a radial function on $[0,1)$ and $n\in\N\cup\{0\}$. Then 
	$$
	P_\omega(f_n)(z)=\frac{\int_0^1\omega(s)\varphi(s)s^{2n+1}\,ds}{\omega_{2n+1}}z^n,\quad z\in\D.
	$$
Therefore
	\begin{equation}\label{eq:j1}
	\begin{split}
	\left(\frac{\int_0^1\varphi(s) \omega(s)\,s^{2n+1}\,ds}{\omega_{2n+1}}\right)^p \nu_{np+1} 
	&=\|P_\omega(f_n)\|^p_{L^p_\nu}
	\le\|P_\omega\|^p \int_0^1 \varphi^p(s)\nu(s)s^{np+1}\,ds,
	\end{split}
	\end{equation}
and hence $\omega(r)r$ is absolutely continuous with respect to $\nu(r)r$. Moreover, the proof of \cite[Lemma~B]{MOPR21} shows that $\sigma\in L^1$, and hence by choosing $\varphi(r)=\left(\frac{r^{(2-p)n}\omega(r)}{\nu(r)}\right)^{\frac{1}{p-1}}$ in \eqref{eq:j1}, we obtain
	$$
	D_p(\om,\nu)\le \|P_\om\|_{L^{p}_\nu\to L^{p}_\nu}.
	$$

(ii). Assume $D_p(\om,\nu)<\infty$, and let first $\om\in\DD$. Then $\om_{2n+1}\lesssim\om_{4n+1}$ for all $n\in\N\cup\{0\}$ by Lemma~\ref{Lemma:simple}. H\"older's inequality yields
	\begin{equation*}
	\left(\nu_{pn+1}\right)^\frac1p
	\le D_p(\om,\nu)\frac{\om_{2n+1}}{\left(\sigma_{p'n+1}\right)^\frac1{p'}}
	\lesssim D_p(\om,\nu)\frac{\om_{4n+1}}{\left(\sigma_{2p'n+1}\right)^\frac1{p'}}
	\le D_p(\om,\nu)\left(\nu_{2pn+1}\right)^\frac1p,
	\end{equation*}
and it follows that $\nu_{pn+1}\le D_p^p(\om,\nu)\nu_{2pn+1}$ for all $n\in\N\cup\{0\}$. Let $3\le x<\infty$, and choose $n\in\N\cup\{0\}$ such that $pn+1\le x<pn+2$. Then
	\begin{equation}\label{ljdshglasfdjh}
	\nu_x
	\le\nu_{pn+1}
	\le D_p^p(\om,\nu)\nu_{2pn+1}
	\le D_p^p(\om,\nu)\nu_{2x-4}\asymp \nu_{2x},\quad 3\le x<\infty,
	\end{equation}
and thus $\nu\in\DD$ by Lemma~\ref{Lemma:simple}. The proof of the fact that $\sigma\in\DD$, provided $\om\in\DD$, is analogous to the argument just presented, and hence it is omitted. 

Assume next $\nu\in\DD$. Take $y\in\mathbb{R}$ such that $1-p'<y<2-p'$. Then H\"older's inequality yields
	$$
	\om_{n(p'+y)+1}\le \left(\sigma_{np'+1}\right)^{\frac{1}{p'}} \left(\nu_{n(p'+py)+1}\right)^{\frac{1}{p}},
	$$
where $p'+y$ and $p'+py$ both are strictly positive by the choice of $y$. This together with the hypothesis $ D_p(\om,\nu)<\infty$ gives
	\begin{equation}\label{eq:j2}
	\sup_{n\in\N\cup\{0\}}
	\frac{\om_{n(p'+y)+1}}{\om_{2n+1}}\left(\frac{\nu_{np+1}}{\nu_{n(p'+py)+1}}\right)^{\frac{1}{p}}\le D_p(\om,\nu)<\infty.
	\end{equation}
Since $p'+y<2$ is equivalent to $p>p'+yp$, the quotient $\frac{\nu_{np+1}}{\nu_{n(p'+py)+1}}$ is bounded from below by the assumption $\nu\in\DD$ and Lemma~\ref{Lemma:simple}. Thus $\om_{n(p'+y)+1}\lesssim\om_{2n+1}$ for all $n\in\N\cup\{0\}$. Since $p'+y<2$ by the choice of $y$, we may argue as in \eqref{ljdshglasfdjh} to deduce $\om\in\DD$.

(iii). Assume $D_p(\om,\nu)<\infty$, and let $\nu\in\M$. It is easy to see that, by the assumption $D_p(\om,\nu)<\infty$, there exists a constant $C_1=C_1(\omega,\nu,p,p')>0$ such that
	$$
	\omega_x\ge C_1 \left(\nu_{\frac{p}{2}x}\right)^{\frac{1}{p}}\left(\sigma_{\frac{p'}{2}x}\right)^{\frac{1}{p'}},\quad x\ge 2.
	$$
Moreover, since $\nu\in\M$, by \cite[(2.17)]{PR19} there exist constants $\eta=\eta(\nu)>0$ and $C_2=C_2(\nu)>0$ such that 
	$$
	\nu_x\ge C_2 \left(\frac{y}{x}\right)^\eta \nu_y,\quad 1\le x\le y<\infty.
	$$
Therefore
	\begin{equation*}
	\begin{split}
	\omega_{x}&\ge C_1\left(\nu_{\frac{p}{2}x}\right)^{\frac{1}{p}}\left(\sigma_{\frac{p'}{2}x}\right)^{\frac{1}{p'}}
	\ge C_1 \left(C_2 K^\eta \right)^{\frac1p}\left(\nu_{\frac{p}{2}Kx}\right)^{\frac{1}{p}}\left(\sigma_{\frac{p'}{2}Kx}\right)^{\frac{1}{p'}}
	\ge C_1 \left(C_2 K^\eta \right)^{\frac1p} \omega_{Kx},\quad x\ge2.
	\end{split}
	\end{equation*}
Consequently, by choosing $K>1$ sufficiently large such that $C_1\left(C_2 K^\eta \right)^{\frac1p}>1$, we deduce $\omega\in\M$.
\end{proof}

Next, we prove some other results which  will be used in the proof of Theorem~\ref{th:main}. Recall that
	\begin{equation*}
  M_p(\om,\nu)
	=\sup_{0\le r<1}\left(\int_0^r\frac{\nu(s)s}{\left(\int_s^1\om(t)t\,dt\right)^p}\,ds+1\right)^\frac1p
	\left(\int_r^1\sigma(t)t\,dt\right)^{\frac{1}{p'}}.
  \end{equation*}
	
\begin{lemma}\label{lema Ap+ cond pesos} Let $1<p<\infty$ and $\om\in\DD$, and let $\nu$ be a radial weight.
\begin{itemize}
\item[\rm(i)] If 
 $A_p(\om,\nu)<\infty$, then $\nu,\,\s\in\DD$.
\item[\rm(ii)] $D_p(\om,\nu)<\infty$ if and only if $A_p(\om,\nu)<\infty$.
\item[\rm(iii)] If $A_p(\om,\nu)<\infty$, and either $\nu\in\M$ or $\sigma\in\M$, then $\om\in\Dd$.
\item[\rm(iv)] If $M_p(\om,\nu)<\infty$, then $\nu\in\DD$.
\end{itemize}
\end{lemma}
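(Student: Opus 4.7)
The plan is to handle (i)--(iv) in the order stated, relying throughout on the pointwise factorization $\om=\nu^{1/p}\s^{1/p'}$, which via Hölder gives $\int_E\om(t)t\,dt\le(\int_E\nu(t)t\,dt)^{1/p}(\int_E\s(t)t\,dt)^{1/p'}$ for any measurable $E\subset[0,1)$; on the hypothesis $\om\in\DD$, which is equivalent to $\int_r^1\om(t)t\,dt\le C\int_{(1+r)/2}^1\om(t)t\,dt$; and on the asymptotic $\om_x\asymp\widehat{\om}(1-1/x)$ valid for $\om\in\DD$ (available from \cite[Lemma 2.1(vi)]{P}).

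For (i), combining the $A_p$-bound at $r$, the $\DD$-doubling of $\om$, and Hölder applied on $[(1+r)/2,1]$ yields
\begin{equation*}
\left(\int_r^1\nu(t)t\,dt\right)^{1/p}\left(\int_r^1\s(t)t\,dt\right)^{1/p'}
\lesssim \left(\int_{(1+r)/2}^1\nu(t)t\,dt\right)^{1/p}\left(\int_{(1+r)/2}^1\s(t)t\,dt\right)^{1/p'}.
\end{equation*}
Since the $\s$-tail is nonincreasing, $(\int_{(1+r)/2}^1\s(t)t\,dt)^{1/p'}\le (\int_r^1\s(t)t\,dt)^{1/p'}$; dividing the display by the latter factor and raising to the $p$-th power gives $\int_r^1\nu(t)t\,dt\lesssim \int_{(1+r)/2}^1\nu(t)t\,dt$, that is, $\nu\in\DD$. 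The symmetric cancellation (dividing by the $\nu$-tail instead) gives $\s\in\DD$.

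For (ii), the direction $A_p\Rightarrow D_p$ uses (i) to place $\nu,\s$ in $\DD$, while $D_p\Rightarrow A_p$ uses Proposition~\ref{Proy. acotada cond. pesos Jouni}(ii) to do the same. Once all three weights lie in $\DD$, the equivalences $\om_{2n+1}\asymp\int_{1-1/n}^1\om(t)t\,dt$, $\nu_{np+1}\asymp\int_{1-1/n}^1\nu(t)t\,dt$, $\s_{np'+1}\asymp\int_{1-1/n}^1\s(t)t\,dt$ together with a routine doubling interpolation between the grid points $r_n=1-1/n$ and $r_{n+1}$ yield $D_p\asymp A_p$. Part (iii) is then quickly obtained: (ii) gives $D_p(\om,\nu)<\infty$, Proposition~\ref{Proy. acotada cond. pesos Jouni}(iii) then forces $\om\in\M$ from $\nu\in\M$, and together with the hypothesis $\om\in\DD$ and the identity $\DDD=\DD\cap\M$ recorded in the introduction this amounts to $\om\in\Dd$. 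If instead $\s\in\M$, I rerun the proof of Proposition~\ref{Proy. acotada cond. pesos Jouni}(iii) with the roles of $\nu$ and $\s$ swapped---the monotonicity of moments handles the other factor for free---and again conclude $\om\in\M$.

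For (iv), set $F(r)=\int_0^r \nu(s)s\,(\int_s^1\om(t)t\,dt)^{-p}\,ds$ and $I(r)=\int_r^{(1+r)/2}\nu(s)s\,ds$. Since $\int_s^1\om(t)t\,dt\le \int_r^1\om(t)t\,dt$ for $s\ge r$, one has $F((1+r)/2)\ge I(r)/(\int_r^1\om(t)t\,dt)^p$; on the other hand the $M_p$-bound evaluated at $(1+r)/2$ gives $F((1+r)/2)\le M_p^p/(\int_{(1+r)/2}^1\s(t)t\,dt)^{p-1}$. Hölder on $[(1+r)/2,1]$ raised to the $p$-th power yields $(\int_{(1+r)/2}^1\om(t)t\,dt)^p\le \int_{(1+r)/2}^1\nu(t)t\,dt\cdot(\int_{(1+r)/2}^1\s(t)t\,dt)^{p-1}$, and $\om\in\DD$ lets me replace $\int_r^1\om(t)t\,dt$ by $\int_{(1+r)/2}^1\om(t)t\,dt$ up to a multiplicative constant. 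Chaining these four estimates, the $\s$-tail factors cancel and I arrive at $I(r)\lesssim \int_{(1+r)/2}^1\nu(t)t\,dt$, whence $\int_r^1\nu(t)t\,dt=I(r)+\int_{(1+r)/2}^1\nu(t)t\,dt\lesssim\int_{(1+r)/2}^1\nu(t)t\,dt$, which is $\nu\in\DD$. I expect (iv) to be the main obstacle: the trick is to evaluate $M_p$ at the doubled argument $(1+r)/2$ rather than at $r$, so that the Hölder bound on $\int_{(1+r)/2}^1\om(t)t\,dt$ combines with the $\DD$-doubling of $\om$ to produce matching $\s$-tails that cancel cleanly.
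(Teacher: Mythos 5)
Your argument is correct, and while parts (i)--(ii) match the paper, parts (iii)--(iv) take genuinely different routes. For (i) you use exactly the paper's three ingredients (the $A_p$ bound, $\om\in\DD$, and H\"older on the outer annulus), merely reorganizing the cancellation, and for (ii) both you and the paper reduce to $\nu,\s\in\DD$ via (i) and Proposition~\ref{Proy. acotada cond. pesos Jouni}(ii) and then apply the moment--tail dictionary $\om_x\asymp\omg(1-1/x)$ on a dyadic-type grid. In (iii) with $\nu\in\M$ the paper argues directly at the level of tail integrals: it upgrades $\nu$ to $\DDD$, invokes the quantitative characterization of $\Dd$ from \cite[Lemma~B]{RosaPelaez2022} to gain a factor $K^{\alpha/p}$ in $\nug$, and pushes that gain through the $A_p$ quotient and \eqref{holder} to get $\om\in\Dd$; you instead transfer membership in $\M$ from $\nu$ to $\om$ via Proposition~\ref{Proy. acotada cond. pesos Jouni}(iii) and read off $\om\in\Dd$ from $\DD\cap\M=\DDD$. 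Your route is shorter and recycles machinery already proved, at the price of leaning on the cited identity $\DDD=\DD\cap\M$; and your symmetrized treatment of the case $\s\in\M$ is sound, since the inequality $\om_x\gtrsim(\nu_{px/2})^{1/p}(\s_{p'x/2})^{1/p'}$ treats the two factors interchangeably and monotonicity of moments handles whichever one is not in $\M$ (the paper instead reruns its own tail computation with $\nu$ and $\s$ swapped). In (iv) the paper applies H\"older to $\int_0^r\nu(s)s\,(\nug(s)\omg(s))^{-1}\,ds$, integrates the $\nu$-factor explicitly, and concludes by bounding a logarithm of the tail ratio; your version restricts $F\bigl(\tfrac{1+r}{2}\bigr)$ to $[r,\tfrac{1+r}{2}]$ and cancels the $\s$-tails directly against the H\"older bound for $\bigl(\int_{(1+r)/2}^1\om(t)t\,dt\bigr)^p$, which is cleaner and avoids the logarithm altogether. (The division by $\int_{(1+r)/2}^1\s(t)t\,dt$ is harmless: if $\s$ vanished a.e.\ on a tail, so would $\om$, contradicting the standing assumption $\omg>0$.)
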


\begin{proof}
(i) Assume that $A_p(\om,\nu)<\infty$. Hölder's inequality yields 
	\begin{equation}\label{holder}
	\int_r^1\om(t)t\,dt\le\left(\int_r^1\nu(t)t\,dt\right)^\frac1p\left(\int_r^1\sigma(t)t\,dt\right)^\frac1{p'},\quad 0\le r<1.
	\end{equation}
The hypothesis $\om\in\DD$ combined with this inequality gives
	\begin{equation*}
	\begin{split}
  \left(\int_r^1\nu(t)t\,dt\right)^\frac1p
	&\le A_p(\om,\nu)\frac{\int_r^1\om(t)t\,dt}{\left(\int_r^1\sigma(t)t\,dt\right)^\frac1{p'}}
	\le A_p(\om,\nu)\frac{\int_r^1\om(t)t\,dt}{\left(\int_{\frac{1+r}{2}}^1\sigma(t)t\,dt\right)^\frac1{p'}}\\
	&\lesssim A_p(\om,\nu)\frac{\int_\frac{1+r}{2}^1\om(t)t\,dt}{\left(\int_{\frac{1+r}{2}}^1\sigma(t)t\,dt\right)^\frac1{p'}}
	\le A_p(\om,\nu)\left(\int_\frac{1+r}{2}^1\nu(t)t\,dt\right)^\frac1p,\quad 0\le r<1,
	\end{split}
	\end{equation*}
and hence $\nu\in\DD$. By interchanging the roles of $\nu$ and $\sigma$, the argument above also proofs the second statement $\s\in\DD$.

(ii) This is an immediate consequence of the case (i) just proved, Proposition~\ref{Proy. acotada cond. pesos Jouni}(ii) and \cite[Lemma~2.1]{P}.

(iii) We will prove only the case in which $\nu\in\M$ since the assertion under the hypothesis on $\sigma$ is obtained by the same argument. The part (i) just established implies $\nu\in\DD$, and since $\nu\in\M$ by the hypothesis, we deduce $\nu\in\DDD=\DD\cap\Dd$ by \cite[Theorem~3]{PR19}. By \cite[Lemma~B]{RosaPelaez2022} we know that $\nu\in\Dd$ if and only if there exist constants $C=C(\nu)>0$ and $\alpha=\alpha(\nu)>0$ such that
	$$
	\int_r^1\nu(s)s\,ds\ge C\left(\frac{1-r}{1-t}\right)^\alpha\int_t^1\nu(s)s\,ds,\quad 0\le r\le t<1.
	$$
Therefore
	\begin{equation*}
	\begin{split}
	\int_r^1\om(t)t\,dt
	&\ge\frac1{A_p(\om,\nu)}\left(\int_r^1\nu(t)t\,dt\right)^\frac1p\left(\int_r^1\sigma(t)t\,dt\right)^\frac1{p'}\\
	&\ge\frac{C^\frac1p K^\frac{\alpha}p}{A_p(\om,\nu)}\left(\int_{1-\frac{1-r}{K}}^1\nu(t)t\,dt\right)^\frac1p\left(\int_r^1\sigma(t)t\,dt\right)^\frac1{p'}\\
	&\ge\frac{C^\frac1p K^\frac{\alpha}p}{A_p(\om,\nu)}
	\left(\int_{1-\frac{1-r}{K}}^1\nu(t)t\,dt\right)^\frac1p\left(\int_{1-\frac{1-r}{K}}^1\sigma(t)t\,dt\right)^\frac1{p'}\\
	&\ge\frac{C^\frac1p K^\frac{\alpha}p}{A_p(\om,\nu)}\left(\int_{1-\frac{1-r}{K}}^1\om(t)t\,dt\right),\quad 0\le r<1,
	\end{split}
	\end{equation*}
where the last inequality follows from \eqref{holder}. By choosing $K=K(\om,\nu,p)>1$ sufficiently large such that $C^\frac1p K^\frac{\alpha}p>A_p(\om,\nu)$, we deduce $\om\in\Dd$.

(iv) By Hölder's inequality, we deduce
	\begin{equation*}
	\begin{split}
  \int_0^r\frac{\nu (s)s}{\int_s^1\nu(t)t\,dt\int_s^1\om(t)t\,dt}ds
	&\le\left(\int_0^r\frac{\nu(s)s}{\left(\int_s^1\om(t)t\,dt\right)^p}ds\right)^{\frac{1}{p}}
	\left(\int_0^r\frac{\nu(s)s}{\left(\int_s^1\nu(t)t\,dt\right)^{p'}}ds\right)^{\frac{1}{p'}}\\
	&\lesssim M_p(\om,\nu)\frac{\left(\int_r^1\nu(t)t\,dt\right)^{\frac{1}{p'}-1}}{\left(\int_r^1\sigma(t)t\,dt\right)^{\frac{1}{p'}}}
	\le\frac{M_p(\om,\nu)}{\int_r^1\om(t)t\,dt},\quad 0\le r<1.
  \end{split}
	\end{equation*}
By replacing $r$ by $\frac{1+r}{2}$, and using the hypothesis $\om\in\DD$, we deduce
	\begin{equation*}
	\begin{split}
  \frac{M_p(\om,\nu)}{\int_r^1\om(t)t\,dt}
	&\gtrsim\frac{M_p(\om,\nu)}{\int_{\frac{1+r}{2}}^1\om(t)t\,dt}
	\gtrsim\frac1{\int_r^1\om(t)t\,dt}\int_r^{\frac{1+r}{2}}\frac{\nu (s)s}{\int_s^1\nu(t)t\,dt}ds\\
	&=\frac1{\int_r^1\om(t)t\,dt}\log\frac{\int_r^1\nu(t)t\,dt}{\int_\frac{1+r}{2}^1\om(t)t\,dt},\quad 0\le r<1,
  \end{split}
	\end{equation*}
and it follows that $\nu\in\DD$.
\end{proof}

\section{Main results}\label{s2}

Now we are ready to prove Theorems~\ref{th:main} and~\ref{teorema 1}. We begin with the latter one.

\medskip

\begin{Prf}{\em Theorem~\ref{teorema 1}.}
Assume first (i), that is, $P_{\om}^+:L^p_{\nu}\to L^p_{\nu}$ is bounded. Then $\nu\in\M$ by \cite[Proposition~20]{PR19} and $D_p(\om,\nu)<\infty$ by Proposition~\ref{Proy. acotada cond. pesos Jouni}(i). Thus (ii) is satisfied. The fact that (ii) implies (iii) is an immediate consequence of Lemma~\ref{lema Ap+ cond pesos}(ii). Assume now (iii). Then $\nu\in\DD$ by Lemma~\ref{lema Ap+ cond pesos}(i) and $\om\in\Dd$ by Lemma~\ref{lema Ap+ cond pesos}(iii). Since $\DD\cap\M=\DD\cap\Dd$ by \cite[Theorem~3]{PR19}, we deduce $\om,\,\nu\in\DDD$, and hence  Theorem~\ref{teorema 13PR19} yields (i). Thus (i), (ii) and (iii) are equivalent.

For any radial weights $\om$ and $\nu$ the condition $A_p(\om,\nu)<\infty$ implies $M_p(\om,\nu)<\infty$ by \cite[pp. 55--56]{PR19}. Thus (iii) implies (iv). To complete the proof, assume (iv). Since $\DD\cap\M=\DD\cap\Dd$ by \cite[Theorem~3]{PR19}, Lemma~\ref{lema Ap+ cond pesos}(iv) yields $\nu\in\DD\cap\M=\DDD$. The proof of \cite[(6.3)]{PR19} shows that $A_p(\om,\nu)\lesssim M_p(\om,\nu)$ if $\om\in\DD$ and $\nu\in\Dd$. Thus (iii) holds. This finishes the proof of the theorem.
\end{Prf}

\medskip

\begin{Prf}{\em{Theorem~\ref{th:main}.}} Obviously, (ii) implies (i). Further, the implication (i)$\Rightarrow$(iii) follows by Proposition~\ref{Proy. acotada cond. pesos Jouni}(i). Assume now (iii). Since $\nu\in\DD$ by the hypothesis, we have $\omega\in\DD$ by Proposition~\ref{Proy. acotada cond. pesos Jouni}(ii). Therefore $A_p(\om, \nu)<\infty$ by Lemma~\ref{lema Ap+ cond pesos}(ii), and hence (iv) is satisfied. Finally, (iv)$\Rightarrow$(ii) by Theorem~\ref{teorema 1}, and thus the proof is complete.
\end{Prf}

Some more notation is needed before proving the next result which is essential to obtain Theorem~\ref{teorema 2}.
For $f\in \H(\D)$, write
		$$
    M_1(r,f)=\frac{1}{2\pi}\int_0^{2\pi}|f(re^{i\theta})|\,d\theta,\quad 0\le r<1.
    $$

\begin{proposition}
\label{Suficiencia acotacion Twk+}
Let $1<p<\infty$, $k\in\N$ and $\om\in\DD$, and let $\nu$ be a radial weight such that $A_p(\om,\nu)<\infty$. Then $T_{\om, k}^+:L^p_{\nu}\to L^p_{\nu}$ is bounded.
\end{proposition}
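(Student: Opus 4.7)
The plan is to apply Schur's test to the positive integral operator $T^+_{\om,k}$. In the form adapted to $L^p_\nu$, Schur's test asserts that $T^+_{\om,k}$ is bounded on $L^p_\nu$ provided there exists a positive measurable function $h$ such that
$$
\int_\D K(z,\zeta) h(\zeta)^{p'}\,dA(\zeta)\lesssim h(z)^{p'}\quad\text{and}\quad \int_\D K(z,\zeta) h(z)^{p}\nu(z)\,dA(z)\lesssim h(\zeta)^{p}\nu(\zeta),
$$
where $K(z,\zeta)=(1-|z|)^k\left|(B^\om_\zeta)^{(k)}(z)\right|\om(\zeta)$ is the kernel of $T^+_{\om,k}$. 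I would verify both inequalities with the radial test function $h(w)=\widehat{\s}(w)^{-1/(pp')}$, the standard choice for problems of this type; cf.\ the proof of Theorem~\ref{teorema 13PR19} in \cite{PR19}.

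Before attacking the Schur integrals, I would collect some consequences of the hypotheses. By Lemma~\ref{lema Ap+ cond pesos}(i), the assumptions $\om\in\DD$ and $A_p(\om,\nu)<\infty$ force $\nu,\s\in\DD$ as well, so all three tails $\widehat{\om},\widehat{\nu},\widehat{\s}$ are positive, finite and doubling on $[0,1)$. Moreover, since $\om=\nu^{1/p}\s^{1/p'}$ pointwise by the very definition of $\s$, Hölder's inequality yields $\widehat{\om}(r)\lesssim\widehat{\nu}(r)^{1/p}\widehat{\s}(r)^{1/p'}$, while $A_p(\om,\nu)<\infty$ provides the reverse bound. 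Consequently one has the tail equivalence
$$
\widehat{\om}(r)\asymp\widehat{\nu}(r)^{1/p}\widehat{\s}(r)^{1/p'},\qquad r\in[0,1),
$$
which will underlie every subsequent estimate. Next, I would use the rotational invariance of the reproducing kernel to replace, in each Schur integral, the pointwise quantity $|(B^\om_\zeta)^{(k)}(z)|$ by its $M_1$-mean in the angular variable, for which precise asymptotic estimates in terms of $\widehat{\om}$ and $(1-\cdot)^{-(k+1)}$ are available from \cite[Theorem~1]{PelRatproj} thanks to $\om\in\DD$. Both Schur integrals then reduce to one-dimensional integrals in the radial variables, involving only the tails of $\om$, $\nu$, $\s$ together with the $(1-|z|)^k$-prefactor and the $(1-t)^{-(k+1)}$-singularity inherited from the $k$-th derivative of the kernel.

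The main, and indeed the only substantial, obstacle is the balancing at this last stage: one must combine the cancellation between the prefactor $(1-|z|)^k$ and the $(1-t)^{-(k+1)}$-singularity with the doubling of $\om$, $\nu$ and $\s$ so as to bring both integrals into a form directly controlled by the tail equivalence above. Once that reduction is in place, each of the two required inequalities follows by a Fubini rearrangement, integration by parts against $d\widehat{\s}$ or $d\widehat{\nu}$, and the standard doubling estimates. The case $k=0$ in fact corresponds to the maximal Bergman projection and is essentially the content of Theorem~\ref{teorema 1}; the argument for $k\ge1$ runs along the same lines, with the extra $k$-th derivative of the kernel compensated exactly by the prefactor $(1-|z|)^k$ after invoking the mean estimate.
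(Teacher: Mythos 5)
Your overall strategy (a Schur/H\"older test built on the kernel estimates of \cite[Theorem~1]{PelRatproj} and the tail identity $\omg\asymp\nug^{1/p}\,\sg^{1/p'}$) is the right one and is essentially what the paper does, but the concrete test function you propose does not work, and the step you dismiss as ``balancing'' is where the whole proof lives. With $h=\sg^{-1/(pp')}$ the second Schur inequality reads
$$
\om(\z)\int_{\D}(1-|z|)^k\bigl|(B^{\om}_{\z})^{(k)}(z)\bigr|\,\sg(z)^{-1/p'}\nu(z)\,dA(z)\lesssim \sg(\z)^{-1/p'}\nu(\z),
$$
which, since the integral is a strictly positive function of $|\z|$ alone, forces a \emph{pointwise} bound of $\om(\z)$ by $\nu(\z)$ times tail quantities. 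For standard weights this happens to hold (every quantity is a power of $1-|\z|$), but for general radial weights in $\DD$ there is no pointwise comparability whatsoever --- as the paper stresses, such weights may vanish on large portions of each annulus --- so this condition genuinely fails; a purely tail-dependent $h$ cannot succeed. Likewise, the first Schur inequality leads to $\int_t^1\om(s)\sg(s)^{-1/p}s\,ds$, and the natural H\"older/dyadic estimate for it produces $\int_t^1\s(s)\sg(s)^{-p'/p}s\,ds$, which diverges whenever $p\le2$. The paper avoids both problems by choosing the auxiliary function $h(\z)=\nu(\z)^{1/p}\bigl(\int_{|\z|}^1\s(s)s\,ds\bigr)^{1/(pp')}$: the pointwise factor $\nu^{1/p}$ is exactly what makes $(\om/h)^{p'}=\s\,\sg^{-1/p}$ integrate \emph{exactly} to $p'\,\sg^{1/p'}$ (identity \eqref{integracion}) and what cancels against $\nu(\z)$ at the final stage, so that no pointwise comparison between $\om$ and $\nu$ is ever invoked.

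Beyond the choice of $h$, two further ingredients that you omit are actually indispensable. First, the radial estimate over the range $s<|\z|$ (the term $I^{|\z|}$ in the paper) is \emph{not} a consequence of the tail equivalence plus doubling alone: it requires the Bekoll\'e-type quantity $M_p(\om,\nu)$, i.e.\ control of $\int_0^{r}\nu(s)s\,\omg(s)^{-p}\,ds$, which enters via the implication $A_p(\om,\nu)<\infty\Rightarrow M_p(\om,\nu)<\infty$ from \cite[pp.~55--56]{PR19}; your sketch never produces or invokes this. Second, the kernel estimates of \cite[Theorem~1]{PelRatproj} are asymptotic near the boundary, so one must first dispose of the region near the origin (the paper does this with the subharmonicity of $z\mapsto(1-|z|)^{-k}T^+_{\om,k}(|f|)(z)$ and a separate elementary bound for the contribution of $D(0,1/2)$). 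As written, the proposal names the correct external inputs but leaves the two inequalities that constitute the proof unverified, and the specific verification route indicated would break down for general radial weights.
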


\begin{proof}
The hypothesis $A_p(\om, \nu)<\infty$ and Hölder's inequality imply $L^p_{\nu}\subset L^1_{\om}$. Let $f\in L^p_{\nu}$. Then the function $z\mapsto\frac{T_{\om,\,k}^+(|f|)(z)}{(1-\vert z \vert)^k}$ is subharmonic in $\D$, and thus its integral means are non-decreasing. Therefore
		\begin{align*}
    \Vert T_{\om, k}^+(f)\Vert_{L^p_{\nu}}^p
		&\le\Vert T_{\om, k}^+(|f|)\Vert_{L^p_{\nu}}^p 
		\lesssim \int_{\D\setminus D(0,\frac{1}{2})} \left( T_{\om, k}^+(|f|)(z)\right)^p\nu(z)\,dA(z)\\
		& \lesssim\int_{\D\setminus D(0,\frac{1}{2})} (1-\vert z \vert)^{kp} \left(\int_{\D\setminus D(0,\frac{1}{2})} 
		\vert f(\zeta)\vert \vert (B_{\zeta}^{\om})^{(k)}(z)\vert \om (\zeta)\,dA(\zeta)\right)^p\nu(z)\,dA(z)\\
		&\quad+\int_{\D\setminus D(0,\frac{1}{2})} (1-\vert z \vert)^{kp} 
		\left(\int_{D(0,\frac{1}{2})} \vert f(\zeta)\vert \vert (B_{\zeta}^{\om})^{(k)}(z)\vert \om (\zeta)\,dA(\zeta)\right)^p\nu(z)\,dA(z)\\
		&\lesssim\int_{\D\setminus D(0,\frac{1}{2})} (1-\vert z \vert)^{kp} 
		\left(\int_{\D\setminus D(0,\frac{1}{2})} \vert f(\zeta)\vert \vert (B_{\zeta}^{\om})^{(k)}(z)\vert \om(\zeta)\,dA(\zeta)\right)^p
    \nu(z)\,dA(z)\\
		&\quad+\left(\int_{D(0,\frac{1}{2})} \vert f(\zeta)\vert \om (\zeta)\,dA(\zeta)\right)^p=I(f)+II(f).
		\end{align*}		
To deal with $I(f)$, consider the function 
	$$
	h(\zeta)=\nu(\zeta)^{\frac{1}{p}}\left(\int_{|\zeta|}^1\sigma(s)s\,ds\right)^\frac1{pp'},\quad \zeta\in\D,
	$$
which is well defined by the hypothesis $A_p(\om,\nu)<\infty$. By Hölder's inequality, we obtain
		\begin{equation}
		\begin{split}
		\label{polla}
	 I(f)
		&\le\int_{\D\setminus D(0,\frac{1}{2})}(1-\vert z \vert)^{kp}
		\left(\int_{\D\setminus D(0,\frac{1}{2})} \vert f(\zeta)\vert^p h(\zeta)^p \vert (B_{\zeta}^{\om})^{(k)}(z)\vert  dA(\zeta)\right)\\
		&\quad\cdot\left(\int_{\D}\vert (B_{\zeta}^{\om})^{(k)}(z)\vert \left(\frac{\om (\zeta)}{h(\zeta)}\right)^{p'}dA(\zeta)\right)^{\frac{p}{p'}}\nu(z)\,dA(z).		
		\end{split}
		\end{equation}
   Observe that an integration yields
		\begin{equation}\label{integracion}
    \int_t^1\left(\frac{\om (s)}{h(s)}\right)^{p'}s\,ds
		=p'\left(\int_t^1\left(\frac{\om (s)}{\nu(s)}\right)^{p'} \nu(s)s\,ds\right)^{\frac{1}{p'}}
		=p'\left(\int_t^1\sigma(s)s\,ds\right)^\frac1{p'},\quad 0\le t<1.
		\end{equation}
This together with \cite[Theorem~1]{PelRatproj} and Fubini's Theorem yield 
	\begin{align*}
  \int_{\D}\vert (B_{\zeta}^{\om})^{(k)}(z)\vert\left(\frac{\om (\zeta)}{h(\zeta)}\right)^{p'}dA(\zeta)
	&\lesssim\int_0^1\left(\frac{\om (s)}{h(s)}\right)^{p'} M_1(s, (B_{z}^{\om})^{(k)})s\,ds\\
	&\lesssim\int_0^1\left(\frac{\om (s)}{h(s)}\right)^{p'} \left(\int_0^{s\vert z \vert}\frac{dt}{(1-t)^{k+1}\int_t^1 s\omega(s)\,ds}+1\right)s\,ds\\
	&=\int_0^{\vert z \vert} \frac{1}{(1-t)^{k+1}\int_t^1 s\omega(s)\,ds}\left( \int_{t/\vert z \vert}^1 \left(\frac{\om (s)}{h(s)}\right)^{p'}s\, ds\right) dt +\frac{p'}{2\pi}\|\sigma\|_{L^1}^\frac1{p'}\\
	&\lesssim\int_0^{\vert z \vert} \frac{\left(\int_t^1\sigma(s)s\,ds\right)^\frac1{p'}}{(1-t)^{k+1}\int_t^1 s\omega(s)\,ds} dt+\|\sigma\|_{L^1}^\frac1{p'}\\
	&\lesssim A_p(\om,\nu)\left(\int_0^{\vert z \vert}\frac{dt}{\left(\int_t^1\nu(s)s\,ds\right)^{\frac{1}{p}} (1-t)^{k+1}}+1\right)\\
	&\lesssim \frac{A_p(\om, \nu)}{\left(\int_{|z|}^1\nu(s)s\,ds\right)^{\frac{1}{p}} (1-\vert z \vert )^{k}},\quad z\in\D.
	\end{align*}
By combining this estimate with \eqref{polla}, and by applying Fubini's theorem and \cite[Theorem~1]{PelRatproj} again, we deduce
\begin{align*}
    I(f) &\lesssim A_p^{p-1}(\om,\nu)\int_{\D\setminus D\left(0,\frac{1}{2}\right)}
		\left(\int_{\D\setminus D\left(0,\frac{1}{2}\right)} \vert f(\zeta)\vert^p h(\zeta)^p \vert (B_{\zeta}^{\om})^{(k)}(z)\vert  dA(\zeta)\right)  (1-\vert z \vert)^{k} \frac{\nu(z)}{\nug(z)^{\frac{1}{p'}}} dA(z)
\\
&\le A_p^{p-1}(\om,\nu)\int_{\D\setminus D\left(0,\frac{1}{2}\right)} \vert f(\zeta)\vert^p h(\zeta)^p \left(\int_{\frac{1}{2}}^1(1-s)^{k} \frac{\nu(s)}{\nug(s)^{\frac{1}{p'}}} M_1\left(s, (B_{\zeta}^{\om})^{(k)}\right)\,ds \right)dA(\zeta)
\\
		&\lesssim A_p^{p-1}(\om,\nu)\int_{\D\setminus D\left(0,\frac{1}{2}\right)}\vert f(\zeta)\vert^p h(\zeta)^p
		\left(\int_{\frac{1}{2}}^1(1-s)^{k}\frac{\nu(s)}{\nug(s)^{\frac{1}{p'}}}
		\left(\int_0^{s\vert \zeta \vert} \frac{1}{\omg(t)(1-t)^{k+1}}\,dt+1\right)  ds \right)dA(\zeta)
\\
		&\lesssim A_p^{p-1}(\om,\nu)\int_{\D\setminus D\left(0,\frac{1}{2}\right)}\vert f(\zeta)\vert^p h(\zeta)^p 
		\left(\int_{\frac{1}{2}}^1\frac{\nu(s)}{\nug(s)^{\frac{1}{p'}}}\frac{(1-s)^{k}}{\omg(s\vert \zeta \vert)(1-s\vert \zeta \vert)^{k}}\,ds \right)dA(\zeta)
\\
		&\le A_p^{p-1}(\om,\nu)\int_{\D\setminus D\left(0,\frac{1}{2}\right)}\vert f(\zeta)\vert^p h(\zeta)^p 
		\left(\int_{\frac{1}{2}}^1\frac{\nu(s)}{\omg(s\vert \zeta \vert)\nug(s)^{\frac{1}{p'}}}\,ds \right)dA(\zeta).
\end{align*}

We now split the integral over $(\frac12,1)$ into two parts at $|\zeta|$. On one hand,
\begin{equation}
\begin{split}
    \label{I}
    I_{|\zeta|}
		&=h(\zeta)^p\left(\int_{|\zeta|}^1\frac{\nu(s)}{\omg(s\vert \zeta \vert)\nug(s)^{\frac{1}{p'}}}\,ds \right)
		\le\frac{h(\z)^p}{\omg(\z)} \int_{|\z|}^1\frac{\nu(s)}{\nug(s)^{\frac{1}{p'}}}  ds\\
    &\le p\frac{\sg(\z)^{\frac{1}{p'}}\nug(\zeta)^{\frac{1}{p}}}{\omg(\z)} \nu(\z)
		\lesssim A_p(\om, \nu) \nu(\z),\quad \zeta\in\D\setminus D\left(0,\frac12\right).
	\end{split}    
	\end{equation}
On the other hand, 
		\begin{equation*}
    \begin{split}
		I^{|\zeta|}
		&=h(\zeta)^p 
		\left(\int_{\frac12}^{|\zeta|}\frac{\nu(s)}{\omg(s\vert \zeta \vert)\nug(s)^{\frac{1}{p'}}}\,ds \right)
		\le h(\zeta)^p \int^{|\z|}_{\frac{1}{2}} \frac{1}{\omg(s)} \frac{\nu(s)}{\nug(s)^{\frac{1}{p'}}}\,ds\\
		&=h(\zeta)^p \int^{|\z|}_{\frac{1}{2}} \frac{\omg(s)^{p-1} \left(\int_0^s\frac{\nu(t)}{\omg(t)^p}t\,dt\right)^{\frac{1}{p'}}}{\nug(s)^{\frac{1}{p'}}} \frac{\nu(s)}{\omg(s)^{p} \left(\int_0^s\frac{\nu(t)}{\omg(t)^p}t\, dt\right)^{\frac{1}{p'}}}\,ds\\
		&\lesssim M_p^\frac{p}{p'}(\om,\nu)h(\zeta)^p \int^{|\z|}_{\frac{1}{2}} \frac{\omg(s)^{p-1} }{\nug(s)^{\frac{1}{p'}}\widehat{\sigma}(s)^\frac{p}{(p')^2}} \frac{\nu(s)}{\omg(s)^{p} \left(\int_0^s\frac{\nu(t)}{\omg(t)^p}t\, dt\right)^{\frac{1}{p'}}}\,ds\\
		&\le M_p^\frac{p}{p'}(\om,\nu)h(\zeta)^p\int^{|\z|}_{\frac{1}{2}}
		\frac{\nu(s)}{\omg(s)^{p} \left(\int_0^s\frac{\nu(t)}{\omg(t)^p}t\, dt\right)^{\frac{1}{p'}}}\,ds\\
		&\lesssim M_p^{p-1}(\om,\nu)\nu(\zeta)\left(\int_{|\zeta|}^1\sigma(t)t\,dt\right)^\frac1{p'}\left(\int_0^{|\zeta|}\frac{\nu(t)}{\omg(t)^p}t\,dt\right)^\frac1p\\
		&\lesssim M_p^{p}(\om,\nu)\nu(\zeta),\quad \zeta\in\D\setminus D\left(0,\frac12\right).
    \end{split}
		\end{equation*}
Since $A_p(\om,\nu)<\infty$ implies $M_p(\om,\nu)<\infty$ by \cite[pp. 55--56]{PR19}, we deduce $I(f)\lesssim \Vert f\Vert _{L^p_{\nu}}^p$. Finally, by Hölder's inequality,
		\begin{align*}
    II(f)
		\le\Vert f\Vert_{L^p_{\nu}}^p
		\left(\sigma(D(0,1/2))\right)^{\frac{p}{p'}}
		\lesssim \Vert f\Vert_{L^p_{\nu}}^p,\quad f\in L^p_\nu,
		\end{align*}
and thus $T_{\om, k}^+:L^p_{\nu}\to L^p_{\nu}$ is bounded.
\end{proof}

\begin{Prf}{\em{Theorem~\ref{teorema 2}}.}
It is clear by the definitions that (i) implies (ii). Assume now (ii). Then $P_{\om}(f)=f$ for all $f \in A^p_{\nu}$, and hence $\|f\|_{D^p_{\nu,k}}\lesssim\|f\|_{A^p_{\nu}}$ for all $f\in\H(\D)$. Therefore $\nu\in\DD$ by \cite[Theorem~6]{PR19}. Now that $\nu\in\M$ by the hypothesis, and $\DDD=\DD\cap\M$ by \cite[Theorem~6]{PR19}, we deduce $A^p_{\nu}=D^p_{\nu,k}$ by \cite[Theorem~5]{PR19}. Therefore (iii) is satisfied. Further, Proposition \ref{Proy. acotada cond. pesos Jouni}(i) shows (iii)$\Rightarrow$(iv), and Lemma~\ref{lema Ap+ cond pesos}(ii) gives (iv)$\Rightarrow$(v). Finally, (iv) implies (i) by Proposition~\ref{Suficiencia acotacion Twk+}. This finishes the proof.
\end{Prf}


%

\medskip

Proposition~\ref{Suficiencia acotacion Twk+} says that $T_{\om,k}^+:L^p_{\nu}\to L^p_{\nu} $ is bounded if 
$\om \in \DD$ and $A_p(\om, \nu)<\infty$. However,   $P_{\om}^+:L^p_{\nu}\to L^p_{\nu}$ is not necessarily bounded under these hypotheses on
the weights involved. Indeed, 
if  $\om =\nu \in \DD\setminus \M$, the condition $A_p(\om, \nu)<\infty$ holds  but $P_{\om}^+$ is not bounded on $L^p_{\om}$   by \cite[Theorem~3 and Theorem~20]{PR19}. In fact, by \cite[Theorem~20]{PR19}, a necessary condition for $P_{\om}^+:L^p_{\nu}\to L^p_{\nu} $ to be bounded is that both weights $\om$ and $\nu$ belong to $\mathcal{M}$.

We finish the section by proving Theorem~\ref{th:curioso}.

\medskip

\begin{Prf}{\em{Theorem~\ref{th:curioso}.}}
First, observe that $P_{\om}$ is a well-defined operator on $L^p_{\nu}$ by the hypothesis, and hence $P_{\om}(f)=f$ for all $f\in A^p_{\nu}$. Therefore $\Vert f \Vert_{D^p_{\nu, k}}=\Vert P_{\om}(f) \Vert_{D^p_{\nu, k}}\lesssim \Vert f \Vert_{A^p_{\nu}} $ for all $f\in A^p_{\nu}$, and hence $\nu \in \DD$ by \cite[Theorem~6]{PR19}. It follows that $P_{\nu}:L^p_{\nu}\to L^p_{\nu}$ and $P_{\nu}:L^p_{\nu}\to D^p_{\nu, k}$ are bounded  operators by \cite[Theorems~7 and~11]{PR19}.
\end{Prf}

\section{Proof of Theorem~\ref{th:exponenciales}}\label{s3}

Recall that 
	$$
	\nu (r)=\exp \left(-\frac{\a}{(1-r^l)^{\b}} \right)\quad\text{ and }\quad \om(r)= \exp \left(-\frac{\at}{(1-r^{\widetilde{l}})^{\bt}} \right), \quad 0\leq r <1,
	$$
where $0<\a,\at,l,\widetilde{l}<\infty$ and $0<\b,\bt\le1$. The proof is divided into several steps. We begin with showing that $\beta=\bt$ is a necessary condition for $P_{\om}:L^p_{\nu}\to L^p_{\nu}$ be bounded. 
\medskip

\noindent{\bf{Step 1.}} Let us assume that $\beta\neq \bt$. We will show that $D_p(\om, \nu)=\infty$, and therefore $P_{\om}:L^p_{\nu}\to L^p_{\nu}$ is not bounded by Proposition \ref{Proy. acotada cond. pesos JouniINTRO}.

First, observe that 
	$$
	\s (r)=\exp \left( -\frac{p' \at }{(1-r^{\widetilde{l}})^{\bt}}+ \frac{p'}{p}\frac{\a}{(1-r^l)^{\b}} \right),\quad 0\le r<1,
	$$
and therefore $\s$ is not a weight if $\b>\bt$. Thus $D_p(\om,\nu)=\infty$ in the case $\b>\bt$.
 
Let now $\bt>\b$, and note that 
	$$
	\s (r)= \exp \left( -\frac{p' \at }{(1-r^{\widetilde{l}})^{\bt}} \left(1-\frac{\a}{\at p}\frac{(1-r^{\widetilde{l}})^{\bt}}{(1-r^l)^{\b}}\right)\right)\geq \exp \left( -\frac{p' \at }{(1-r^{\widetilde{l}})^{\bt}}\right),\quad 0\le r<1.
	$$
Then \cite[Lemma~2.1]{BonetLuskyTaskinen}, see also \cite[Lemma~1]{DostanicIbero},  yields
	\begin{equation*}
	\begin{split}
\om_{2n}&\asymp n^{-\frac{2+\bt}{2(\bt+1)}} \exp\left(-B\left(\at, \bt, \widetilde{l}\right)2^{\frac{\bt}{\bt+1}}n^{\frac{\bt}{\bt+1}}\right)\\
(\nu_{pn})^{\frac{1}{p}}&\asymp n^{-\frac{1}{p}\frac{2+\b}{2(\b+1)}} \exp\left(-B\left(\a, \b, l\right)p^{-\frac{1}{\b+1}}n^{\frac{\b}{\b+1}}\right),\\
(\s_{p'n})^{\frac{1}{p'}}&\gtrsim n^{-\frac{1}{p'}\frac{2+\bt}{2(\bt+1)}} \exp\left(-B\left(p'\at, \bt, \widetilde{l}\right)(p')^{-\frac{1}{\bt+1}}n^{\frac{\bt}{\bt+1}}\right)
	\end{split}\quad n\in\N, 
	\end{equation*}
where $B(\alpha,\beta,l)=l^{-\frac{\beta}{\beta+1}}\alpha^\frac1{\beta+1}\left(\beta^{\frac1{\beta+1}}+\beta^{-\frac1{\beta+1}}\right)$. Since
	$$
	-B(p'\at, \bt, \widetilde{l})(p')^{-\frac{1}{\bt+1}}+ B(\at, \bt, \widetilde{l})2^{\frac{\bt}{\bt+1}}
	=(\widetilde{l})^{-\frac{\bt}{\bt+1}} (\at)^{\frac{1}{\bt+1}} (\bt^{\frac{1}{\bt+1}}+ (\bt)^{-\frac{1}{\bt+1}} )(2^{\frac{\bt}{\bt+1}}-1)>0 
	$$
and $\frac{\bt}{\bt+1}> \frac{\b}{\b+1}$, we deduce
	$$
	\lim_{n\to \infty}\exp \left(n^{\frac{\bt}{\bt+1}}\left(-B(p'\at, \bt, \widetilde{l})(p')^{-\frac{1}{\bt+1}}+ B(\at, \bt, \widetilde{l})2^{\frac{\bt}{\bt+1}} \right)-n^{\frac{\b}{\b+1}}p^{-\frac{1}{\b+1}} B(\a, \b, l)\right)=\infty.
	$$
Consequently, $D_p(\om, \nu)=\infty$ and the first step of the proof is finished.

\medskip

As the second step we show that we can reduce the consideration to the case in which $l$ and $\widetilde{l}$ are iqual.

\medskip

{\bf{Step 2.}} Let $\beta=\bt$. Since $\lim_{r\to 1^-}\frac{1-r^{\widetilde{l}}}{1-r^l}=\frac{\widetilde{l}}{l}$ and $0<\beta\le 1$, a direct calculation shows that 
	$$
	\nu(r)\asymp \widetilde{\nu}(r)=\exp \left(-\left(\frac{\widetilde{l}}{l}\right)^\beta\frac{\a}{(1-r^{\widetilde{l}})^{\b}} \right),\quad 0\le r<1,
	$$
and therefore $L^p_\nu=L^p_{\widetilde{\nu}}$. It follows that it suffices to prove the case $\widetilde{l}=l$.

\medskip

We next show that if $\beta= \bt$ and $l=\widetilde{l}$, then $\at=\frac{2 \a}{p}$ is a necessary condition for $P_{\om}:L^p_{\nu}\to L^p_{\nu}$ to be bounded.

\medskip

{\bf{Step 3.}} Let $\beta=\bt$, $l=\widetilde{l}$ and $\at\neq\frac{2\a}{p}$. If $\at<\frac{\a}{p}$, then 
	$$
	\s(r)= \exp \left(-\frac{p'(\at-\frac{\a}{p})}{(1-r^l)^{\b}} \right),\quad 0\le r<1,
	$$
is not a weight, and hence $D_p(\om,\nu)=\infty$. Therefore $P_{\om}:L^p_{\nu}\to L^p_{\nu}$ is not bounded in this case by Proposition~\ref{Proy. acotada cond. pesos JouniINTRO}. 

If $\at=\frac{\a}{p}$, then \cite[Lemma~2.1]{BonetLuskyTaskinen} implies
	\begin{equation*}\label{moment estimations}
	\begin{split}
	\om_{2n}&\asymp 2^{-\frac{2+\b}{2(\b+1)}} n^{-\frac{2+\b}{2(\b+1)}} \exp\left(-B\left(\frac{\a}{p}, \b, l\right)2^{\frac{\b}{\b+1}}n^{\frac{\b}{\b+1}}\right)\\
	(\nu_{pn})^{\frac{1}{p}}&\asymp (pn)^{-\frac{1}{p}\frac{2+\b}{2(\b+1)}} \exp\left(-B\left(\a, \b, l\right)p^{-\frac{1}{\b+1}}n^{\frac{\b}{\b+1}}\right),\quad n\in\N,\\ 
	(\s_{np'})^{\frac{1}{p'}} &\asymp n^{-\frac{1}{p'}}.
	\end{split}
	\end{equation*}
Since 
	$$
	-B(\a, \b, l)p^{-\frac{1}{\b+1}}+B\left(\frac{\a}{p}, \b, l\right)2^{\frac{\b}{\b+1}}
	=l^{-\frac{\b}{\b+1}} \a^{\frac{1}{\b+1}} (\b^{\frac{1}{\b+1}}+\b^{-\frac{1}{\b+1}})p^{-\frac{1}{\b+1}}(2^{\frac{\b}{\b+1}}-1)>0,
	$$ 
we have $D_p(\om, \nu)=\infty$, and therefore $P_{\om}:L^p_{\nu}\to L^p_{\nu}$ is not bounded by Proposition~\ref{Proy. acotada cond. pesos JouniINTRO}.

Let now $\at > \frac{\a}{p}$, and observe that 
	$$
	B(\a, \b, l)p^{-\frac{1}{\b+1}}+ B\left(p'\left(\at-\frac{\a}{p}\right), \b, l \right)(p')^{-\frac{1}{\b+1}}-B\left(\at, \b, l\right)2^{\frac{\b}{\b+1}}<0
	$$
if and only if 
\begin{equation}
\label{i}
p^{-\frac{1}{\b +1}}\a^{\frac{1}{\b+1}} +\left(\at-\frac{\a}{p}\right)^{\frac{1}{\b+1}}-2^{\frac{\b}{\b+1}} \at^{\frac{1}{\b+1}}<0.
\end{equation}
Write $t=\frac{1}{\b+1}\in \left[\frac{1}{2},1\right)$, and consider the function $f:(\frac{\a}{p},\infty)\to \mathbb{R}$, defined by
	$$
	f(\at)=p^{-t}\a^{t} +\left(\at-\frac{\a}{p}\right)^{t}-2^{1-t} \at^{t}.
	$$ 
Then $f\left(\frac{2\a}{p}\right)=0$ and $f(\at)<0$ if $\at\neq \frac{2\a}{p}$. 
It follows that  $D_p(\om, \nu)=\infty$, and hence $P_{\om}:L^p_{\nu}\to L^p_{\nu}$ is not bounded by Proposition~\ref{Proy. acotada cond. pesos JouniINTRO}.

\medskip

We complete the proof by referring to the recent literature in order to see that $P_{\om}:L^p_{\nu}\to L^p_{\nu}$ is bounded whenever $\beta= \bt$,  $l=\widetilde{l}$ and $\at = \frac{2 \a}{p}$. 

\medskip

{\bf{Step 4.}} Let $\beta= \bt$,  $l=\widetilde{l}$ and $\at = \frac{2 \a}{p}$. Then $P_{\om}:L^p_{\nu}\to L^p_{\nu}$ is bounded by
\cite[Theorem ~1.3]{BonetLuskyTaskinen}, see also \cite[Theorem~4.1]{HuLvSc}.

\end{document}